\documentclass[11pt]{amsart}

\usepackage{latexsym}
\usepackage{amssymb}
\usepackage{amsmath}
\usepackage{enumerate}
\usepackage{color}
\usepackage{comment}
\usepackage{a4wide}

\newtheorem{theorem}{Theorem}[section]
\newtheorem{lemma}[theorem]{Lemma}
\newtheorem{proposition}[theorem]{Proposition}
\newtheorem{corollary}[theorem]{Corollary}
\newtheorem{definition}[theorem]{Definition}

\newtheorem{remark}[theorem]{Remark}
\newtheorem{remarks}[theorem]{Remarks}

\newcommand{\cl}[1]{\mathcal{#1}}
\newcommand{\bb}[1]{\mathbb{#1}}

\newcommand{\NN}{\ensuremath{\mathbb{N}}} 
\newcommand{\Z}{\ensuremath{\mathbb{Z}}} 
\newcommand{\CC}{\ensuremath{\mathbb{C}}} 

\newcommand\nph{\varphi} 
\def\inv{^{-1}} 
\newcommand{\unA}{1_A} 
\newcommand{\ip}[2]{\ensuremath{\left\langle #1 , #2\right\rangle}} 
\newcommand\supp{\mathop{\rm supp}} 
\newcommand\id{\mathop{\rm id}} 
\newcommand\cb{\mathop{\rm cb}} 
\newcommand\pii[1]{\pi(#1)} 

\begin{document}

\title[Positive Herz-Schur multipliers and approximations]{Positive Herz-Schur multipliers and approximation properties of crossed products}

\author[A. McKee]{A. McKee}
\address{Mathematical Sciences Research Centre, Queen's University Belfast, Belfast BT7 1NN, United Kingdom}
\email{amckee240@qub.ac.uk}

\author[A. Skalski]{A. Skalski}
\address{Institute of Mathematics of the Polish Academy of Sciences,
ul.~\'Sniadeckich 8, 00--656 Warszawa, Poland}
\email{a.skalski@impan.pl}

\author[I. G. Todorov]{I. G. Todorov}
\address{Mathematical Sciences Research Centre, Queen's University Belfast, Belfast BT7 1NN, United Kingdom}
\email{i.todorov@qub.ac.uk}

\author[L. Turowska]{L. Turowska}
\address{Department of Mathematical Sciences,
Chalmers University of Technology and  the University of Gothenburg,
Gothenburg SE-412 96, Sweden}
\email{turowska@chalmers.se}


\begin{abstract}
For a $C^*$-algebra $A$ and a set $X$ we give a Stinespring-type characterisation of the  completely positive Schur $A$-multipliers on $\cl K(\ell^2(X))\otimes A$.
We then relate them to completely positive Herz-Schur multipliers on $C^*$-algebraic crossed products of the form $A\rtimes_{\alpha,r} G$, with $G$ a discrete group, whose various versions were considered earlier by Anantharaman-Delaroche, B\'edos and Conti, and Dong and Ruan.
The latter maps are shown to implement approximation properties, such as nuclearity or the Haagerup property, for $A\rtimes_{\alpha,r} G$.
\end{abstract}

\subjclass[2010]{Primary: 46L55, Secondary: 43A35, 46L05}

\keywords{completely positive Schur $A$-multipliers; C*-crossed products; approximation properties}

\maketitle

\section{Introduction}

\noindent
Schur multipliers, a class of maps generalising the operators of entrywise (Schur) multiplication on finite matrices, were first abstractly studied by Grothendieck in his famous `R\'esum\'e' \cite{Gro}. 
Since then they have played a very important role in operator theory, the theory of Banach spaces, and operator space theory (for a longer discussion we refer to the introduction of \cite{mtt}, and to Section 5 of \cite{Pisier}).
In the simplest situation they arise in the following manner: to a (discrete) set $X$ and a function $\phi: X \times X \to \mathbb{C}$,
one associates an operator $S_\phi$ on the space of compact operators on the Hilbert space $\ell^2(X)$; if the resulting map is (completely) bounded, we call $S_\phi$ a 
Schur multiplier with symbol $\phi$.
A related class of maps is that of the Herz-Schur multipliers associated to a discrete (or, more generally, locally compact) group $G$.
On one hand, these can be viewed as the special class of Schur multipliers
whose symbol $\phi : G \times G \to \mathbb{C}$ is `invariant'.
On the other hand, they form a natural extension of the Fourier multipliers from
classical harmonic analysis; in particular, in this case,
the operator $S_\phi$ can be viewed as acting on the reduced group $C^*$-algebra $C^*_r(G)$ of $G$.

Since the pioneering work of Haagerup, 
it has been known that the existence of Herz-Schur multipliers of a particular type encodes various approximation properties of $C^*_r(G)$, see Chapter 12 of \cite{bo} (it is worth mentioning that the same fact persists for discrete unimodular \emph{quantum groups} --- see the recent survey \cite{Brannan}).
An important ingredient of the proof of such results, which features again in this paper, can be summarised as follows: if $(\phi_i)_{i \in I}$ is a net of Herz-Schur multipliers on $G$ with certain properties then the associated operators $S_{\phi_i}$ implement an approximation property of $C^*_r(G)$;  conversely any family of approximating maps on $C^*_r(G)$ can be `averaged' into Herz-Schur multipliers. An early example of this technique is Lance's proof \cite{lance} that a discrete group is amenable if and only if its reduced group $C^*$-algebra is nuclear.

Recently both classes of maps discussed above have been generalised to the $C^*$-algebra-valued case. 
In the paper \cite{mtt}, written by three authors of the current article, a class of \emph{Schur $A$-multipliers} is identified, where $A$ is a $C^*$-algebra faithfully represented on a Hilbert space $H$.
In this `operator-valued' case,
the starting point is a function $\phi$, defined on the direct product $X \times X$, and taking values in
the space $CB(A, \cl B(H))$ of all completely bounded maps from $A$ into
the $C^*$-algebra $\cl B(H)$ of all bounded linear operators on $H$.
The associated operator $S_\phi$ acts from
$\cl K(\ell^2(X))\otimes A$ to $\cl K(\ell^2(X))\otimes B(H)$; the function $\phi$ is called a Schur $A$-multiplier if the map $S_\phi$ is completely bounded.
Again, in the case where $X$ is in fact a discrete group $G$, the $C^*$-algebra $A$ is equipped with an action of $G$, and the function $\phi$ satisfies a natural invariance property with respect to the action, we are led to a generalisation of Herz-Schur multipliers, this time acting on the $C^*$-algebraic crossed product $A\rtimes_{\alpha,r} G$.
Such maps have also been studied in a series of papers by B\'edos and Conti (see \cite{bc2} and references therein), where they are viewed as generalisations of Fourier multipliers.
In \cite{mtt} several general properties and examples of
Schur $A$-multipliers were established; later, in \cite{Andrew}, it was shown that the completely bounded approximation property of reduced crossed products can be characterised {\it via} the existence of Herz-Schur $A$-multipliers of a particular type.

The descriptions above imply that the operator-space-theoretic concept of complete boundedness plays an important role in the theory of Schur and Herz-Schur multipliers.
In the study of approximation properties of $C^*$-algebras it is well known that a special role is played by \emph{completely positive} maps. These are the subject of our paper.
We first characterise, in Theorem \ref{re:Sphicpintermsofphi},
those functions $\phi:X \times X \to CB(A, \cl B(H))$ for which $S_\phi$ is a completely positive Schur $A$-multiplier, calling such
$\phi$ \emph{functions of positive type}.
Then we prove Theorem~\ref{co:equivalencesofpositiveHSmultiplier}, a transference-type result: in the case where
$X$ is a discrete group $G$ acting on $A$ by automorphisms, there is a
one-to-one correspondence between certain `invariant' functions $\phi:G \times G \to CB(A)$ of positive type, and functions $F:G \to CB(A)$ leading to completely positive Herz-Schur multipliers on the reduced crossed product $A\rtimes_{\alpha,r} G$.
The latter class of maps is then compared to those studied earlier by Anantharaman-Delaroche~\cite{Claire}, B\'edos and Conti~\cite{bc2}, and Dong and Ruan~\cite{dong_ruan}.
These general results are later applied to show that approximation properties of $C^*$-algebraic crossed products can be realised using Herz-Schur $A$-multipliers, generalising the corresponding statements for reduced group $C^*$-algebras (which can be viewed as `crossed products with trivial coefficients'), mentioned earlier in this introduction.

The plan of the paper is as follows: 
 after finishing this section by introducing some general notational conventions, in Section 2 we discuss functions of positive type and associated completely positive Schur $A$-multipliers, establishing a Stinespring-type representation for the latter.
We also introduce the related completely positive Herz-Schur $A$-multipliers, which act on reduced crossed products, and compare the resulting class with those considered earlier by other authors.
Section~3 presents a characterisation of the Haagerup property for the reduced crossed product in terms of the existence of a certain class of completely positive Herz-Schur $A$-multipliers; Section~4 solves the analogous problem for nuclearity.

For $C^*$-algebras $A$ and $B$, we denote by $A \otimes B$ their minimal/spatial tensor product;
$\mathcal{Z}(A)$ stands for the centre of $A$, while $A^+$ designates the cone of positive elements of $A$.
Scalar products will be always linear on the left.
For Hilbert spaces $H$ and $K$, we denote by $H\otimes K$ their Hilbertian tensor product.
We let $\cl B(H,K)$ be the space of all bounded linear operators from $H$ into $K$ and set $\cl B(H) = \cl B(H,H)$.
For a vector space $V$, we denote, as usual, by $M_n(V)$ the space of all $n$ by $n$ matrices with entries in $V$.
For a map $\phi : V\to W$ between vector spaces $V$ and $W$, we let $\phi^{(n)} : M_n(V)\to M_n(W)$
be the map given by $\phi^{(n)}((x_{i,j})_{i,j}) = (\phi(x_{i,j}))_{i,j}$.
For operator spaces $\cl X$ and $\cl Y$, we denote by $CB(\cl X,\cl Y)$ the (operator) space of all
completely bounded maps from $\cl X$ into $\cl Y$.
The paper will rely on acquaintance with basic operator space theory; we refer the reader to \cite{er_book} and \cite{paulsen}
for necessary background.


\section{Completely positive Herz-Schur multipliers}\label{s_psam}


\noindent
We recall some notions and results from \cite{mtt}.
Let $X$ be a set, $H$ a Hilbert space, and $A\subseteq\cl \cl B(H)$ a (nondegenerate) $C^*$-algebra.
We write $\cl K = \cl K(\ell^2(X))$ for the algebra of all compact operators on $\ell^2(X)$.
Set $\cl H = \ell^2(X)\otimes H$; we identify $\cl H$ with the Hilbert space $\ell^2(X,H)$
of all square summable sequences in $H$.
It was noted in \cite{mtt} that, for $k\in \ell^2(X\times X,A)$, the formula
\[
    (T_k\xi)(x) = \sum_{y\in X}k(x,y) \big( \xi(y) \big), \quad x\in X,\ \xi\in\cl H,
\]
defines a bounded operator on $\cl H$ with $\|T_k\|\leq \|k\|_2$.
The functions $k : X\times X\to A$ will often be referred to as \emph{kernels}.
We let
\[
    \cl S_2(X\times X,A) = \{T_k : k\in \ell^2(X\times X,A)\}
\]
and note that
$\cl S_2(X\times X,A)$ is a dense subspace of the minimal tensor product $\cl K \otimes A$.
Given a bounded function $\nph : X\times X\to CB(A,\cl B(H))$ and $k\in \ell^2(X\times X,A)$,
we write $\nph\cdot k$ for the function in $\ell^2(X\times X,A)$
given by $(\nph\cdot k)(x,y) = \nph(x,y)(k(x,y))$, $x,y\in X$.
Note we are using here a different convention to that of \cite{mtt};
see also the comment before Definition \ref{de:positiveHSmult}.
We let
$S_{\nph} : \cl S_2(X\times X,A)\to \cl S_2(X\times X, \cl B(H))$ be the map given by
$S_{\nph}(T_k) = T_{\nph\cdot k}$; it is clear that $S_{\nph}$ is linear and bounded with respect to the norm $\|\cdot\|_2$.
If $S_{\nph}$ is completely bounded, when
$\cl S_2(X\times X,A)$ is equipped with the operator space structure arising from its inclusion into $\cl K \otimes A$,
the function $\nph$ is called a \emph{Schur $A$-multiplier}.
We denote the algebra of all Schur $A$-multipliers on $X\times X$
by $\frak{S}(X,A)$. For $\nph\in \frak{S}(X,A)$, we write $\|\nph\|_{\rm m} = \|S_{\nph}\|_{\rm cb}$.

We note that if in the above paragraph the map
$\nph(x,y) : A\to \cl B(H)$, for $x,y\in X$,
is only assumed to be bounded (as opposed to completely bounded), then the complete boundedness of the map
$S_{\nph} : \cl S_2(X\times X,A) \to \cl S_2(X\times X,A)$ implies that $\nph(x,y)$ is
in fact completely bounded.

Let $\nph \in \frak{S}(X,A)$. Then the map $S_{\nph}$ has a (unique) completely bounded extension to a map on $\cl K\otimes A$, which we denote in the same way.
The following result, with the additional assumptions that the set $X$ be countable and the $C^*$-algebra $A$ be separable, was established in \cite{mtt} as a special case of Theorem~2.6 therein.
An inspection of the proof shows that these assumptions are not needed when $X$ is a discrete set endowed with counting measure.

\begin{theorem}\label{th_mtt}
Let $X$ be a set, $H$ a Hilbert space, $A\subseteq \cl B(H)$ a $C^*$-algebra, and
$\nph : X\times X\to CB(A,\cl B(H))$ a bounded function.
The following are equivalent:
\begin{enumerate}[i.]
\item[(i)] $\nph$ is a Schur $A$-multiplier;
\item[(ii)] there exist a Hilbert space $K$, a non-degenerate $*$-representation $\rho : A\to \cl B(K)$,
and bounded functions $V,W : X\to \cl B(H,K)$ such that
\[
    \nph(x,y)(a) = V(x)^* \rho(a) W(y), \quad x,y\in X, a\in A.
\]
\end{enumerate}
Moreover, if (i) holds true then the functions $V$ and $W$ in (ii) can be chosen so that
$\|\nph\|_{\rm m} = \sup_{x\in X}\|V(x)\| \sup_{y\in X}\|W(y)\|$.
\end{theorem}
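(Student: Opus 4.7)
\emph{Proof plan.} The implication (ii) $\Rightarrow$ (i) is a direct verification. Given the factorisation $\nph(x,y)(a)=V(x)^*\rho(a)W(y)$, introduce the block-diagonal operators $\tilde V\in\cl B(\ell^2(X)\otimes K,\ell^2(X)\otimes H)$ and $\tilde W\in\cl B(\ell^2(X)\otimes H,\ell^2(X)\otimes K)$ defined on elementary tensors by $\tilde V(\delta_x\otimes k)=\delta_x\otimes V(x)^*k$ and $\tilde W(\delta_y\otimes h)=\delta_y\otimes W(y)h$; these have norms $\sup_x\|V(x)\|$ and $\sup_y\|W(y)\|$ respectively. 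Since the amplification $\id_{\cl K}\otimes\rho:\cl K\otimes A\to\cl B(\ell^2(X)\otimes K)$ is a $*$-representation, a direct calculation on a kernel $k\in\ell^2(X\times X,A)$ gives
\[
S_\nph(T_k)=\tilde V\,(\id_{\cl K}\otimes\rho)(T_k)\,\tilde W,
\]
so $S_\nph$ extends to a completely bounded map on $\cl K\otimes A$ with $\|\nph\|_{\rm m}\leq\sup_x\|V(x)\|\sup_y\|W(y)\|$.

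For the harder direction (i) $\Rightarrow$ (ii), the plan is to apply the Wittstock--Paulsen dilation theorem to $S_\nph$, regarded as a completely bounded map from $\cl K\otimes A$ into $\cl B(\ell^2(X)\otimes H)$. This produces a Hilbert space $\cl L$, a non-degenerate $*$-representation $\pi:\cl K\otimes A\to\cl B(\cl L)$, and bounded operators $V',W':\ell^2(X)\otimes H\to\cl L$ with $\|V'\|\,\|W'\|=\|\nph\|_{\rm m}$ such that $S_\nph(z)=V'^*\pi(z)W'$ for every $z\in\cl K\otimes A$. Since every non-degenerate $*$-representation of $\cl K(\ell^2(X))$ is unitarily equivalent to a multiple of the identity representation, and since $\pi|_{\cl K\otimes 1}$ commutes with $\pi|_{1\otimes A}$, there exist a Hilbert space $K$ and a non-degenerate $*$-representation $\rho:A\to\cl B(K)$ such that, after a unitary identification, $\cl L=\ell^2(X)\otimes K$ and $\pi(T\otimes a)=T\otimes\rho(a)$. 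This structure result is valid for arbitrary cardinalities of $X$ and $A$, which is precisely the step where the countability and separability hypotheses of \cite[Theorem 2.6]{mtt} can be dispensed with in the discrete case.

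Finally, the desired factorisation is extracted by testing $S_\nph$ against matrix units. Let $J'_x:H\to\ell^2(X)\otimes H$ and $J_x:K\to\ell^2(X)\otimes K$ be the isometric embeddings $h\mapsto\delta_x\otimes h$ and $k\mapsto\delta_x\otimes k$, so that $E_{x,y}\otimes\rho(a)=J_x\rho(a)J_y^*$. Applying $S_\nph$ to $E_{x,y}\otimes a$ yields $E_{x,y}\otimes\nph(x,y)(a)=J'_x\nph(x,y)(a)(J'_y)^*$; comparing this with $V'^*\pi(E_{x,y}\otimes a)W'=V'^*J_x\rho(a)J_y^*W'$ and composing with $(J'_x)^*$ on the left and $J'_y$ on the right (using $(J'_x)^*J'_x=I_H$) produces the factorisation $\nph(x,y)(a)=V(x)^*\rho(a)W(y)$ with $V(x):=J_x^*V'J'_x$ and $W(y):=J_y^*W'J'_y$. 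The estimates $\|V(x)\|\leq\|V'\|$ and $\|W(y)\|\leq\|W'\|$, combined with the inequality proved in the first direction, then deliver the claimed norm equality $\|\nph\|_{\rm m}=\sup_x\|V(x)\|\sup_y\|W(y)\|$. The only real obstacle is the structure-theoretic identification $\pi\cong\id_{\cl K}\otimes\rho$ for representations of $\cl K(\ell^2(X))\otimes A$; once this is granted in full cardinality-free generality, the rest of the argument is a straightforward instance of the Paulsen--Stinespring philosophy.
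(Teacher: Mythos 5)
Your argument is correct, and it is worth noting that the paper itself gives no self-contained proof of this statement: it simply cites \cite[Theorem 2.6]{mtt} (proved there in a measurable setting, with countability of $X$ and separability of $A$ used for the direct-integral machinery) and asserts that an inspection removes those hypotheses in the discrete case. Your route substantiates exactly that claim by a purely discrete argument: Wittstock--Haagerup--Paulsen factorisation of $S_\nph$ on $\cl K\otimes A$, the structure theorem identifying any non-degenerate representation of $\cl K(\ell^2(X))\otimes A$ with $\id_{\cl K}\otimes\rho$ on $\ell^2(X)\otimes K$, and compression by the matrix units $E_{x,y}\otimes a$ to read off $V(x)=J_x^*V'J'_x$ and $W(y)=J_y^*W'J'_y$; together with your converse estimate this gives the norm equality. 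Two points deserve explicit care rather than assertion. First, the Wittstock representation must be arranged non-degenerate without spoiling $\|V'\|\,\|W'\|=\|S_\nph\|_{\cb}$; this is done by compressing $V'$ and $W'$ by the projection onto the essential subspace of $\pi$, which does not increase their norms. Second, your phrase ``$\pi|_{\cl K\otimes 1}$ commutes with $\pi|_{1\otimes A}$'' is informal, since neither $\cl K\otimes 1$ nor $1\otimes A$ lies inside $\cl K(\ell^2(X))\otimes A$ (the first factor is non-unital whenever $X$ is infinite, and $A$ may be non-unital too); one should extend $\pi$ to the multiplier algebra $M(\cl K\otimes A)$, note that the restriction to $\cl K\otimes 1$ is non-degenerate because an approximate unit of $\cl K\otimes A$ of the form $e_i\otimes u_j$ converges strictly, invoke the cardinality-free classification of non-degenerate representations of the compacts as multiples of the identity, and then use that the commutant of $\cl K(\ell^2(X))\otimes 1$ in $\cl B(\ell^2(X)\otimes K)$ is $1\otimes\cl B(K)$ to produce $\rho$, whose non-degeneracy follows from that of $\pi$. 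With these routine insertions your proof is complete and, in the discrete setting, arguably cleaner and more general than deferring to the measurable-multiplier argument of \cite{mtt}.
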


Suppose that $\nph\in \frak{S}(X,A)$.
The map $S_{\nph} : \cl K \otimes A \to \cl K \otimes \cl B(H)$
has a canonical extension to a map from $\cl B(\ell^2(X))\otimes A$ into the weak* spatial tensor product
$\cl B(\ell^2(X))\bar\otimes \cl B(H)$, which we now describe.
Let $\rho$, $V$ and $W$ be as in Theorem \ref{th_mtt} (ii).
Fix $T\in \cl B(\ell^2(X))\otimes A$ and write $T = (a_{x,y})_{x,y}$, where $a_{x,y}\in A$, $x,y\in X$.
Since $\rho$ is completely bounded, we have that $\rho^{(\infty)}(T)\stackrel{\text{def}}{=} (\rho(a_{x,y}))_{x,y}$
defines a bounded operator on $\ell^2(X)\otimes H$. Letting $V$ (resp.\ $W$) be the diagonal
operator from $\ell^2(X)\otimes H$ into $\ell^2(X)\otimes K$ with entries $V(x)$ (resp.\ $W(y)$),
we set
\begin{equation} \label{Phiform}
    \Phi(T) = V^*\rho^{(\infty)}(T)W.
\end{equation}
We have that $\Phi(T) = (V(x)^*\rho(a_{x,y})W(y))_{x,y}$ and thus
$\Phi$ agrees with the map $S_{\nph}$ on $\cl K\otimes A$. In the sequel, if needed we will use the same symbol, $S_{\nph}$, to denote the mapping $\Phi$. 
Note that the mapping $\Phi$ coincides with the restriction to $\cl B(\ell^2(X))\otimes A$ of the map
$\cl E\circ S_{\nph}^{**}$, where
$S_{\nph}^{**} : \cl B(\ell^2(X))\bar\otimes A^{**}\to \cl B(\ell^2(X))\bar\otimes \cl B(H)^{**}$
is the second dual of $S_{\nph}$ and
$\cl E : \cl B(\ell^2(X))\bar\otimes \cl B(H)^{**} \to \cl B(\ell^2(X))\bar\otimes \cl B(H)$
is the canonical projection, whose existence follows from the fact that $\cl B(H)$ is a dual Banach space.

Recall that a kernel $k: X\times X \to A$ is called \emph{hermitian} if $k(x,y)^* = k(y,x)$ for all $x,y \in X$, and that
$k$ is called \emph{positive definite} if
\[
    \big(k(x_i,x_j)\big)_{i,j} \in M_n(A)^+, \quad \mbox{ for all } x_1,\ldots,x_n\in X \mbox{ and all } n\in \bb{N} .
\]

\begin{proposition}\label{pr:kandTkpositivetogether}
Let $k\in \ell^2(X\times X,A)$ be a hermitian kernel. The operator $T_k$ is positive
if and only if $k$ is positive definite.
\end{proposition}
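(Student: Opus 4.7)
The plan is to test positivity of $T_k$ against finitely supported vectors in $\cl H = \ell^2(X,H)$, which form a dense subspace, and match the resulting quadratic form with the action of the finite matrix $(k(x_i,x_j))_{i,j}$ on $H^n$.

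First I would unfold the definition of $T_k$ on a finitely supported $\xi \in \cl H$ with support in $\{x_1,\ldots,x_n\}$ and compute
\[
    \langle T_k\xi,\xi\rangle_{\cl H}
    = \sum_{i,j=1}^n \langle k(x_i,x_j)\xi(x_j),\xi(x_i)\rangle_H
    = \langle M_n \eta,\eta\rangle_{H^n},
\]
where $M_n = (k(x_i,x_j))_{i,j} \in M_n(A)$ and $\eta = (\xi(x_1),\ldots,\xi(x_n))\in H^n$. This identity is the bridge between the two notions of positivity and does essentially all the work.

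For the forward direction, assuming $k$ is positive definite, the right-hand side is non-negative by definition of positive definiteness in $M_n(A)$. Since finitely supported vectors are dense in $\cl H$ and $T_k$ is bounded (by the estimate $\|T_k\|\leq \|k\|_2$ recalled at the start of Section~2), continuity gives $\langle T_k\xi,\xi\rangle\geq 0$ for every $\xi\in \cl H$, so $T_k\geq 0$.

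For the converse, suppose $T_k\geq 0$. Given $x_1,\ldots,x_n\in X$ and arbitrary $\eta_1,\ldots,\eta_n\in H$, I would define $\xi\in \cl H$ by $\xi(x_i)=\eta_i$ for $i\leq n$ and $\xi(x)=0$ otherwise. The identity above then yields $\langle M_n \eta,\eta\rangle_{H^n}\geq 0$ for all $\eta\in H^n$, so $M_n$ is a positive operator in $\cl B(H^n)=M_n(\cl B(H))$. Since $M_n(A)$ is a C*-subalgebra of $M_n(\cl B(H))$ (positivity is intrinsic to a C*-algebra and agrees with positivity in any faithful representation), it follows that $M_n\in M_n(A)^+$, as required.

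There is no serious obstacle here; the only point deserving care is the coincidence of positivity in $M_n(A)$ with positivity as an operator on $H^n$, which is immediate from the fact that $A\subseteq \cl B(H)$ is a faithful inclusion of C*-algebras. The hypothesis that $k$ be hermitian is in fact automatic in both directions (from positive definiteness on one side, and from $T_k=T_k^*$ combined with $T_k^*=T_{k^*}$ where $k^*(x,y)=k(y,x)^*$ on the other), so it plays no substantive role in the argument.
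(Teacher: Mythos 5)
Your proof is correct and follows essentially the same route as the paper: test $\langle T_k\xi,\xi\rangle$ against finitely supported vectors, identify the resulting quadratic form with that of the matrix $(k(x_i,x_j))_{i,j}$, and use density of finitely supported vectors for one direction and a specific choice of $\xi$ for the other. The extra observations (positivity in $M_n(A)$ agreeing with operator positivity on $H^n$, and the hermitian hypothesis being essentially automatic) are accurate but do not change the argument.
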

\begin{proof}
Suppose that $k$ is positive definite and $\xi \in \ell^2(X,H)$ is
finitely supported, say $\supp(\xi) = \{x_1,\dots,x_n\}$.
Then
\[
    \ip{T_k \xi}{\xi} = \sum_{x \in X}\ip{\sum_{y \in X}k(x,y)\xi(y)}{\xi(x)}
    = \sum_{i,j=1}^n\ip{k(x_i,x_j)\xi(x_j)}{\xi(x_i)} \geq 0.
\]
Since the $H$-valued functions with finite support are dense in $\ell^2(X,H)$, we conclude that the operator $T_k$ is positive.

Now suppose that $T_k$ is positive.
Let $\xi_1,\ldots,\xi_n\in H$ and $x_1,\ldots,x_n\in X$.
Define $\xi \in \ell^2(X,H)$ by letting
\[
    \xi(x) =   \begin{cases}
                    \xi_i   & x = x_i ; \\
                    0       & \text{otherwise.}
                \end{cases}
\]
Then
\[
    \sum_{i,j}\ip{k(x_i,x_j)\xi_j}{\xi_i} = \ip{T_k \xi}{\xi} \geq 0,
\]
so $k$ is positive definite.
\end{proof}

\begin{definition}\label{def:phipositive}
A bounded function $\varphi: X\times X\to CB(A,\cl B(H))$ will be called \emph{of positive type} if,
for any $x_1,\ldots,x_m\in X$ and any $a_{p,q}\in A$, $p,q = 1,\dots,m$, such that
$(a_{p,q})_{p,q}\in M_{m}(A)^+$, we have that
$$\big(\varphi(x_p,x_q)(a_{p,q})\big)_{p,q} \in M_{m}(\cl B(H))^+.$$
\end{definition}

\begin{remark}\label{cbautomatic}
{\rm If we only assume that each $\varphi(x,y)$ is bounded then being of positive type implies the complete boundedness of $\varphi(x,y)$ for all $x,y\in X$.
The latter statement follows by arguments similar to those in the proof of Theorem~\ref{re:Sphicpintermsofphi} below.}
\end{remark}

\begin{lemma}\label{r_pt}
Suppose that a bounded function $\varphi: X\times X\to CB(A,\cl B(H))$ is of positive type.
\begin{enumerate}[(i)]
    \item If $x_1,\ldots,x_m\in X$ and $C_{p,q}\in M_n(A)$, $p,q = 1,\dots,m$, are such that
$(C_{p,q})_{p,q}\in M_{mn}(A)^+$, then
\begin{equation}\label{eq_pt}
    \big(\varphi(x_p,x_q)^{(n)}(C_{p,q})\big)_{p,q} \in M_{mn}(\cl B(H))^+;
\end{equation}
    \item If $k\in \ell^2(X\times X,A)$ is a positive definite kernel then $\nph\cdot k$ is such, too.
\end{enumerate}
\end{lemma}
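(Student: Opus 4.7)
The plan is to prove (ii) as an almost immediate consequence of Definition \ref{def:phipositive}, and (i) by a reindexing argument that upgrades the scalar positivity from the definition to the matrix-valued version \eqref{eq_pt}. I expect no serious obstacle: the only thing to watch is the bookkeeping for the block identification $M_m(M_n) \cong M_{mn}$ in part (i).

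For (i), the idea is to apply Definition \ref{def:phipositive} to a list of $mn$ points of $X$ obtained by repeating each $x_p$ precisely $n$ times. More precisely, write each $C_{p,q} \in M_n(A)$ as $C_{p,q} = (c_{p,q}^{i,j})_{i,j=1}^n$ with $c_{p,q}^{i,j} \in A$. Under the canonical $\ast$-isomorphism $M_m(M_n(A)) \cong M_{mn}(A)$, the positive element $(C_{p,q})_{p,q}$ corresponds to the matrix whose entry in position $((p,i),(q,j))$ is $c_{p,q}^{i,j}$, and this matrix lies in $M_{mn}(A)^+$. Now set $y_{(p,i)} := x_p$ for all admissible $p,i$. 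Applying Definition \ref{def:phipositive} to the $mn$ points $y_{(p,i)}$ and the positive matrix with entries $c_{p,q}^{i,j}$ produces a positive element of $M_{mn}(\cl B(H))$ whose $((p,i),(q,j))$-entry equals $\varphi(y_{(p,i)}, y_{(q,j)})(c_{p,q}^{i,j}) = \varphi(x_p, x_q)(c_{p,q}^{i,j})$. Reassembling via the same block identification $M_m(M_n(\cl B(H))) \cong M_{mn}(\cl B(H))$ (which, being a $\ast$-isomorphism, preserves positivity and commutes with entrywise application of $\varphi(x_p, x_q)$), this matrix coincides with $(\varphi(x_p, x_q)^{(n)}(C_{p,q}))_{p,q}$, giving \eqref{eq_pt}.

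For (ii), fix any $n$ and any $x_1,\ldots,x_n \in X$. Positive definiteness of $k$ gives $(k(x_p, x_q))_{p,q} \in M_n(A)^+$, so Definition \ref{def:phipositive} with coefficients $a_{p,q} := k(x_p, x_q)$ immediately yields
\[
    \big((\varphi \cdot k)(x_p, x_q)\big)_{p,q} = \big(\varphi(x_p, x_q)(k(x_p, x_q))\big)_{p,q} \in M_n(\cl B(H))^+.
\]
As $n$ and the points $x_1,\ldots,x_n$ were arbitrary, $\varphi \cdot k$ is positive definite, completing the proof.
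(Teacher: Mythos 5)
Your proposal is correct and follows essentially the same route as the paper: part (i) is proved by the same repetition trick (setting $y_{p,i}=x_p$ and applying the definition of positive type to the $mn$ points, with the block identification $M_m(M_n(A))\cong M_{mn}(A)$), and part (ii) by direct application of the definition with coefficients $k(x_p,x_q)$, which the paper simply declares straightforward.
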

\begin{proof}
(i) Letting $C_{p,q} = (a^{p,q}_{i,j})_{i,j =1}^n$, we have that $(a_{i,j}^{p,q})_{p,i,q,j}\in M_{mn}(A)^+$.
Set $y_{p,i} = x_p$, $p = 1,\dots,m$, $i = 1,\dots,n$. Then
$$\left(\varphi(x_p,x_q)^{(n)}(C_{p,q})\right)_{p,q} = \left(\varphi(y_{p,i},y_{q,j})(a^{p,q}_{i,j})\right)_{p,i,q,j} \in M_{mn}(A)^+.$$

(ii) is straightforward from the definitions.
\end{proof}

The following theorem provides a characterisation of those Schur $A$-multi-pliers $\varphi$ for which $S_\varphi$ is completely positive.

\begin{theorem}\label{re:Sphicpintermsofphi}
Let $X$ be a set, $H$ a Hilbert space, $A\subseteq \cl B(H)$ a non-degenerate $C^*$-algebra, and
$\varphi : X\times X\to CB(A,\cl B(H))$ a bounded function.
The following are equivalent:
\begin{enumerate}
    \item[(i)] $\nph$ is of positive type;
    \item[(ii)] $\varphi$ is a Schur $A$-multiplier, $S_\varphi$ is completely positive as a map from $\cl K\otimes A$ to $\cl K\otimes\cl B(H)$, and $\|S_{\nph}\|_{\cb} = \sup_{x\in X}\|\nph(x,x)\|_{\cb}$;
    \item[(iii)] there exists a Hilbert space $K$, a non-degenerate $*$-representation $\rho : A\to \cl B(K)$,
and a bounded function $V : X\to\cl B(H,K)$, such that
    \[
        \varphi(x,y)(a)=V(x)^* \rho(a) V(y),\quad x,y\in X,\ a\in A .
    \]
\end{enumerate}
Further if these conditions hold then the canonical extension of $S_\varphi$ to a map on $\cl B(\ell^2(X))\bar\otimes\cl B(H)$ is also completely positive.
\end{theorem}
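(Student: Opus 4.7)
The plan is to prove the cycle (iii) $\Rightarrow$ (ii) $\Rightarrow$ (i) $\Rightarrow$ (iii), observing that the supplementary statement about the extension to $\cl B(\ell^2(X))\bar\otimes\cl B(H)$ drops out of (iii) $\Rightarrow$ (ii). The main obstacle is (i) $\Rightarrow$ (iii), which requires a Stinespring-type construction incorporating the index set $X$.

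For (iii) $\Rightarrow$ (ii), the representation places $\nph$ in the form of Theorem~\ref{th_mtt} with $W=V$, so $\nph$ is a Schur $A$-multiplier; substituting $W=V$ into formula \eqref{Phiform} realises both $S_{\nph}$ on $\cl K\otimes A$ and its canonical extension to $\cl B(\ell^2(X))\otimes A$ as $T\mapsto V^*\rho^{(\infty)}(T)V$, a compression of the $*$-representation $\rho^{(\infty)}$. This is completely positive, yielding complete positivity of $S_{\nph}$ and simultaneously the final assertion. Theorem~\ref{th_mtt} supplies $\|S_{\nph}\|_{\cb}=\sup_x\|V(x)\|^2$, while ordinary Stinespring applied to $\nph(x,x)$, combined with non-degeneracy of $\rho$ and an approximate unit of $A$, gives $\|\nph(x,x)\|_{\cb}=\|V(x)\|^2$, providing the norm identity. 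For (ii) $\Rightarrow$ (i), given distinct $x_1,\dots,x_m\in X$ and $(a_{p,q})\in M_m(A)^+$, the element $T=\sum_{p,q}e_{x_p,x_q}\otimes a_{p,q}$ is positive in $\cl K\otimes A$, so $S_{\nph}(T)$ is positive in $\cl K\otimes\cl B(H)$; reading off the appropriate block produces the required positivity of $(\nph(x_p,x_q)(a_{p,q}))_{p,q}$.

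The core implication (i) $\Rightarrow$ (iii) I would handle by a GNS-type construction on $\cl V:=c_{00}(X)\odot A\odot H$ equipped with the sesquilinear form
\[
\langle e_x\otimes a\otimes\xi,\,e_y\otimes b\otimes\eta\rangle := \langle\nph(y,x)(b^* a)\xi,\eta\rangle_H,
\]
extended by linearity. Hermiticity reduces to the identity $\nph(y,x)(c^*)=\nph(x,y)(c)^*$ (valid for $c$ of the form $a^* b$, hence for all $c\in A$ by density), which in turn follows by applying positive type to the matrix $(a_i^* a_j)_{i,j=1}^{2}\in M_2(A)^+$. Positive semi-definiteness on $v=\sum_i e_{x_i}\otimes a_i\otimes\xi_i$ reduces to positivity of $(\nph(x_p,x_q)(a_p^* a_q))_{p,q}$ in $M_n(\cl B(H))$, which is precisely the positive-type hypothesis applied to the Gramian $(a_p^* a_q)_{p,q}\geq 0$. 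Quotienting by the null space and completing yields a Hilbert space $K$. I define $\rho(c)$ on the dense subspace by $\rho(c)(e_x\otimes a\otimes\xi):=e_x\otimes ca\otimes\xi$; the estimate $\|\rho(c)\|\leq\|c\|$ follows by applying positive type to the positive matrix $(a_p^*(\|c\|^2\cdot 1-c^* c)a_q)_{p,q}$ (computed in a unitisation), and the $*$-representation property is a direct verification.

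Defining $V(x):H\to K$ in the possibly non-unital setting I handle by letting $V(x)\xi$ be the weak limit of $[e_x\otimes e_\lambda\otimes\xi]$ along an approximate unit $(e_\lambda)$ of $A$: the net is uniformly bounded by $\|\nph(x,x)\|_{\cb}^{1/2}\|\xi\|$, and its inner product against each dense vector $[e_y\otimes b\otimes\eta]$ equals $\langle\nph(y,x)(b^* e_\lambda)\xi,\eta\rangle$, which converges to $\langle\nph(y,x)(b^*)\xi,\eta\rangle$ by norm continuity of $\nph(y,x)$ and $b^* e_\lambda\to b^*$. The target formula $V(x)^*\rho(a)V(y)=\nph(x,y)(a)$ then drops out from
\[
\langle\rho(a)V(y)\xi,V(x)\eta\rangle=\lim_{\lambda,\mu}\langle\nph(x,y)(e_\mu a e_\lambda)\xi,\eta\rangle=\langle\nph(x,y)(a)\xi,\eta\rangle,
\]
using $e_\mu a e_\lambda\to a$ in norm; non-degeneracy of $\rho$ is immediate since $\rho(e_\lambda)\to 1$ pointwise on the dense subspace.
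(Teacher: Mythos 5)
Your step (ii)$\Rightarrow$(i) has a genuine gap: you only treat \emph{distinct} $x_1,\dots,x_m$, whereas Definition \ref{def:phipositive} allows repetitions, and the repeated case carries real extra content --- already for a single point it asserts complete positivity of $\nph(x,x)$, while your test element only yields its positivity. With repeated points the element $T=\sum_{p,q}e_{x_p,x_q}\otimes a_{p,q}$ collapses entries, so the matrix $(\nph(x_p,x_q)(a_{p,q}))_{p,q}$ can no longer be read off a compression of $S_\nph(T)$. The fix is short but must genuinely use complete positivity: for arbitrary (possibly repeating) points apply $S_\nph^{(m)}$ to $\widetilde T=\bigl(e_{x_p,x_q}\otimes a_{p,q}\bigr)_{p,q}\in M_m(\cl K\otimes A)$, which is positive because for a Gram decomposition $a_{p,q}=\sum_k b_{k,p}^*b_{k,q}$ one has $\widetilde T=\sum_k S_k^*S_k$ with $S_k=\sum_q E_{1,q}\otimes e_{x_1,x_q}\otimes b_{k,q}$; compressing $S_\nph^{(m)}(\widetilde T)$ by the vector $(\delta_{x_p}\otimes\xi_p)_{p=1}^m$ gives the required positivity. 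A smaller slip: Theorem \ref{th_mtt} does not assert $\|S_\nph\|_{\cb}=\sup_x\|V(x)\|^2$ for \emph{your given} $V$ (only that some $V,W$ achieve equality); from \eqref{Phiform} you get $\|S_\nph\|_{\cb}\leq\sup_x\|V(x)\|^2$, and you should add the easy lower bound $\|S_\nph\|_{\cb}\geq\|\nph(x,x)\|_{\cb}$ (each $\nph(x,x)$ is a compression of $S_\nph$), which together with your identity $\|\nph(x,x)\|_{\cb}=\|V(x)\|^2$ closes the norm statement.

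Apart from this, the proposal is correct and follows a genuinely different route from the paper. The paper proves (i)$\Rightarrow$(ii) directly, via the kernel machinery (Proposition \ref{pr:kandTkpositivetogether}, Lemma \ref{r_pt}) and an approximate-unit computation on finite blocks to get the norm equality, and then obtains (iii) by rerunning the representation theorem of \cite{mtt} with Stinespring in place of Haagerup--Paulsen--Wittstock (this step is only sketched there). You instead prove (i)$\Rightarrow$(iii) by an explicit Kolmogorov/GNS construction on $c_{00}(X)\odot A\odot H$, with a careful treatment of non-unital $A$ via approximate units and weak limits --- your boundedness estimate for $\rho(c)$ through the matrix $\bigl(a_p^*(\|c\|^2 1-c^*c)a_q\bigr)_{p,q}$ and the definition of $V(x)$ as a weak limit are correct --- which has the merit of filling in exactly the part the paper leaves to the reader; condition (ii) and the final assertion then come essentially for free from \eqref{Phiform} with $W=V$, at the cost of routing the norm identity and complete positivity through (iii) rather than proving them directly from (i).
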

\begin{proof}
(i)$\Rightarrow$(ii)
Let $\nph : X\times X \to CB(A,\cl B(H))$ be of positive type and
assume that $k_{i,j} \in \ell^2(X\times X,A)$, $i,j = 1\dots,n$, are such that $T = (T_{k_{i,j}})_{i,j}$ is a positive operator.
Let $[n] = \{1,\dots,n\}$, $Y = [n]\times X$ and $k : Y\times Y\to A$ be given by
$k(i,x,j,y) = k_{i,j}(x,y)$.
After identifying $\ell^2(Y,H)$ with $\bb{C}^n\otimes \ell^2(X,H)$, and
$\cl B(\ell^2(Y,H))$ with $M_n(\cl B(\ell^2(X,H))$, we have that $T = T_k$.
By Proposition \ref{pr:kandTkpositivetogether}, $k$ is positive definite.
Let $\psi : Y\times Y \to CB(A,\cl B(H))$ be given by $\psi(i,x,j,y) = \nph(x,y)$ for all $i,j\in [n]$ and all $x,y\in X$.
We claim that $\psi$ is of positive type.
Indeed, let $m\in \bb{N}$, $x_1,\dots,x_m\in X$,
and $(a_{i,p,j,q})_{i,p,j,q}\in M_{nm}(A)^+$. Set $C_{p,q} = (a_{i,p,j,q})_{i,j}\in M_n(A)$.
By Lemma \ref{r_pt} (i),
$$(\psi(i,x_p,j,x_q)(a_{i,p,j,q}))_{i,p,j,q} = (\nph^{(n)}(x_p,x_q)(C_{p,q}))\in M_{mn}(\cl B(H))^+.$$
By Lemma \ref{r_pt} (ii), $\psi\cdot k$ is positive definite.
In addition, $(T_{\nph\cdot k_{i,j}})_{i,j} = T_{\psi\cdot k}$, and another application of
Proposition \ref{pr:kandTkpositivetogether} shows that $(T_{\nph\cdot k_{i,j}})_{i,j}\in M_n(\cl B(H))^+$.
Thus, $S_{\nph}$ is completely positive on $\cl S_2(X\times X,A)$.

Set $C = \sup_{x\in X} \|\nph(x,x)\|_{\cb}$.
Let $E\subseteq X$ be a finite set, say $|E| = m$. We view $\ell^2(E)\otimes H$ as a subspace of $\ell^2(X)\otimes H$,
and $M_m\otimes A \equiv \cl S_2(E\times E,A) \subseteq \cl S_2(X\times X,A)$.
The map $S_{\nph}$ leaves $\cl S_2(E\times E,A)$ invariant and,
by the previous paragraph, its restriction $S_{\nph,E}$ to the latter space is completely positive.
Let $(e_i)_{i \in \mathcal{I}}$ be an approximate unit in $A$.
It is easy to check that the family $(\tilde{e}_i)_{i \in \mathcal{I}}$,
where $\tilde{e}_i$ is the `diagonal function' constantly equal to $e_i$,
is an approximate unit in the $C^*$-algebra  $\cl S_2(E\times E,A)$.
Since each of the maps $\nph(x,x)$ is completely positive, and for a completely positive map on a C*-algebra its completely bounded norm can be expressed as the limit of the norms of the images of an approximate unit (as follows from Stinespring theorem for non-unital completely positive maps, see for example Appendix A of \cite{NeshveyevStormer}), we have
\[
    \|S_{\nph,E}\|_{\cb} = \lim_{i \in \mathcal{I}} \|S_{\nph,E}(\tilde{e}_i)\| = \lim_{i \in \mathcal{I}} \max_{x\in E}\|\nph(x,x)(e_i)\| = \max_{x\in E}   \|\nph(x,x)\|_{\cb} \leq C.
\]
Since the spaces $(\cl S_2(E\times E,A))_{E\subseteq X}$ form an upwards directed net dense in $\cl S_2(X\times X,A)$,
where the indexing set
of all finite subsets of $X$ is given the inclusion order, we conclude that
$S_{\nph}$ is completely bounded on $\cl S_2(X\times X,A)$ and $\|S_{\nph}\|_{\cb}\leq C$.
Since the map $\nph(x,x)$ is a compression of  $S_{\nph}$,
we have that
$\|\nph(x,x)\|_{\cb}\leq \|S_{\nph}\|_{\cb}$, $x\in X$, and hence
$\|S_{\nph}\|_{\cb} = C$.
On the other hand, the density of $\cl S_2(X\times X,A)$ in $\cl K\otimes A$ implies that
the matricial cones of $\cl K\otimes A$ are closures of the corresponding cones of $\cl S_2(X\times X,A)$.
It follows that $S_{\nph} : \cl K\otimes A \to \cl K\otimes A$ is completely positive.

(ii)$\Rightarrow$(iii) follows the steps of the proof of \cite[Theorem 2.6]{mtt}, using the Stinespring theorem
instead of the Haagerup-Paulsen-Wittstock theorem. We leave the detailed verification to the interested reader.

(iii)$\Rightarrow$(i)
Let $x_1,\dots,x_m\in X$ and $a_{p,q}\in A$ be such that $C = (a_{p,q})_{p,q}\in M_{m}(A)^+$.
Letting
$V = \oplus_{p=1}^m V(x_p)$,
we have that
\begin{eqnarray*}
(\nph(x_p,x_q)(a_{p,q}))_{p,q}
& = &
((V(x_p)^*(\rho(a_{p,q})) V(x_q))_{p,q}\\
& = & V^*\rho^{(m)}(C)V \in M_{m}(\cl B(H))^+.
\end{eqnarray*}

The last statement follows
from the representation  \eqref{Phiform}, taking into account that in the case $S_{\nph}$ is completely positive we may choose $V = W$.
\end{proof}

\medskip

We fix a discrete group $G$, a Hilbert space $H$, a non-degenerate $C^*$-algebra $A\subseteq \cl B(H)$,
and a homomorphism
$\alpha : G\to {\rm Aut}(A)$; we thus have that $(A,G,\alpha)$ is a $C^*$-dynamical system.
We let $\ell^1(G,A)$ be the $*$-algebra of all summable functions $f : G\to A$.
We write $\cl H = \ell^2(G)\otimes H$ and identify $\cl H$ with $\ell^2(G,H)$.
We denote by $\lambda : G\to \cl B(\cl H)$, $t\to \lambda_t$, the unitary representation of $G$
given by $\lambda_t \xi(s) = \xi(t^{-1}s)$, $s\in G$, $\xi\in \cl H$,
and write $\lambda_t^0$ for the corresponding left regular unitary acting on $\ell^2(G)$.
Thus, $\lambda_t = \lambda_t^0 \otimes I$.
We also let $\pi : A\to \cl B(\cl H)$ be the $*$-representation given by
$\pi(a)\xi(s) = \alpha_{s^{-1}}(a)(\xi(s))$, $s\in G$, $a\in A$.
We note the covariance relation
\begin{equation}\label{eq_cov}
    \pi(\alpha_t(a)) = \lambda_t \pi(a) \lambda_t^*, \quad a\in A, t\in G.
\end{equation}
The pair $(\pi,\lambda)$ gives rise to a $*$-representation
$\tilde{\pi} : \ell^1(G,A) \to \cl B(\cl H)$ such that
\begin{equation}\label{eq_repel}
    \tilde{\pi}(f) = \sum_{s\in G} \pi(f(s))\lambda_s, \quad f\in \ell^1(G,A).
\end{equation}
(Note that the series on the right hand side of (\ref{eq_repel}) converges in norm for every $f\in \ell^1(G,A)$.)
The reduced crossed product $A\rtimes_{\alpha,r} G$ is defined by letting
\[
    A\rtimes_{\alpha,r} G = \overline{\tilde{\pi}(\ell^1(G,A))},
\]
where the closure is taken in the operator norm of $\cl B(\cl H)$.
Note that, after identifying $A$ with $\pi(A)$, we may consider $A$ as a $C^*$-subalgebra of $A\rtimes_{\alpha,r} G$.

Identifying $\cl H$ with $\oplus_{s\in G} H$, we associate to every operator $x\in \cl B(\cl H)$
a corresponding matrix $(x_{s,t})_{s,t \in G}$, where $x_{s,t}\in \cl B(H)$ (we use the standard identification: for
$\xi, \eta \in H$ and $s, t \in G$ we have
$\ip{ x(\delta_t \otimes \xi) }{ \delta_s \otimes \eta} = \ip{ x_{s,t} \xi}{ \eta}$). Note that if
$x\in A\rtimes_{\alpha,r} G$ then the diagonal of its matrix coincides with $(\alpha_{t^{-1}}(a_x))_{t\in G}$ for some $a_x\in A$.
This in particular implies that the transformation $\cl E$ that maps $x$ to $a_x$ is a conditional expectation from
$A\rtimes_{\alpha,r} G$ onto $A$.
We also consider the maps $\cl E_t : A\rtimes_{\alpha,r} G\to A$ given by $\cl E_t(x) = \cl E(x\lambda_t^*)$; note that
$\cl E_e = \cl E$.
Thus, to every operator $x\in A\rtimes_{\alpha,r} G$, one can associate the family $(a_t)_{t\in G}\subseteq A$, where
$a_t = \cl E_t(x)$; we write $x\sim \sum_{t\in G} \pii{a_t} \lambda_t$, although the series is formal and no convergence is assumed.
We note that
\begin{equation}\label{eq_el}
    \alpha_{t}(\cl E(x))=\cl E(\lambda_t x\lambda_t^*), \quad t\in G, x\in A\rtimes_{\alpha,r} G.
\end{equation}
The latter equality is straightforward in the case $x = \sum_{s\in F} \pii{a_s} \lambda_s$ for a finite subset $F\subseteq G$,
and follows by continuity for a general $x\in A\rtimes_{\alpha,r} G$.
Similarly, we can check that for any $x \in A\rtimes_{\alpha,r} G$ and $ p, q \in G$, we have $x_{p,q} \in A$ and
\begin{equation}\label{translationformula}
    x_{p,q} = \alpha_{p^{-1}} (x_{e, qp^{-1}}).
\end{equation}
Finally note that, as is well-known,
the construction of the reduced crossed product does not depend (up to a $*$-isomorphism) on the choice of the initial
faithful embedding $A \subseteq \cl B(H)$.

If $F : G\to CB(A)$ is a bounded map and $f\in \ell^1(G,A)$, let $F\cdot f\in  \ell^1(G,A)$ be the function given by
$$(F\cdot f)(t) = F(t)(f(t)), \ \ \ t\in G.$$
Recall \cite{mtt} that $F$ is called a Herz-Schur $(A,G,\alpha)$-multiplier
(or simply a Herz-Schur multiplier if the dynamical system is clear from the context)
if the map $S_F$, given by
$$S_F(\tilde{\pi}(f)) = \tilde{\pi}(F\cdot f), \ \ \ f\in \ell^1(G,A),$$
is completely bounded.
If $F$ is a Herz-Schur multiplier, then the map $S_F$ has a (unique) extension to a
completely bounded map on $A\rtimes_{\alpha,r} G$, which will be denoted in the same way.

For a bounded function $F : G\to CB(A)$, let
$\mathcal{N}(F): G \times G \to CB(A)$ be the function given by
\[ \mathcal{N}(F)(s,t) (a) = \alpha_{s^{-1}}(F(st^{-1})(\alpha_s(a))), \;\;\;\; s,t \in G, a \in A.\]
It was shown in \cite{mtt} that $\cl N$ is an isometric injection
from the algebra of all Herz-Schur $(A,G,\alpha)$-multipliers into
the algebra of all Schur $A$-multipliers.
We note that in \cite{mtt} a different (but similar) convention  was used for defining $\cl N(F)$;
however we found that for the purposes of positivity the definition given above is more natural.

\begin{definition}\label{de:positiveHSmult}
Let $(A,G,\alpha)$ be a $C^*$-dynamical system.
A Herz-Schur $(A,G,\alpha)$-multiplier $F$ will be called
\emph{completely positive}
if the map $S_F : A\rtimes_{\alpha,r} G \to A\rtimes_{\alpha,r} G$ is completely positive.
\end{definition}

\begin{theorem}\label{co:equivalencesofpositiveHSmultiplier}
Let $F : G\to CB(A)$ is a bounded function.
The following are equivalent:
\begin{enumerate}[(i)]
    \item $F$ is a completely positive Herz-Schur $(A,G,\alpha)$-multiplier;
    \item the function $\mathcal{N}(F)$ is of positive type;
    \item $\mathcal{N}(F)$ is a Schur $A$-multiplier and $S_{\mathcal{N}(F)}$ is completely positive.
\end{enumerate}
Moreover, if (i) holds then $\|S_F\|_{\cb} = \|S_{\mathcal{N}(F)}\|_{\cb} = \|F(e)\|_{\cb}$.
\end{theorem}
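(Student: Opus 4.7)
The equivalence (ii) $\Leftrightarrow$ (iii) is immediate from Theorem \ref{re:Sphicpintermsofphi} applied to $\varphi = \mathcal{N}(F)$, so the core work is establishing (i) $\Leftrightarrow$ (iii).

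For (iii) $\Rightarrow$ (i), the plan is to show that $S_F$ coincides with the restriction to $A \rtimes_{\alpha,r} G$ of the canonical extension $\Phi$ of $S_{\mathcal{N}(F)}$ to $\cl B(\ell^2(G)) \bar\otimes \cl B(H)$ given by \eqref{Phiform}. Unwinding the formulas for $\pi$ and $\lambda$, the matrix entries of $\tilde\pi(f)$, for $f \in \ell^1(G,A)$, are $\tilde\pi(f)_{p,q} = \alpha_{p^{-1}}(f(pq^{-1}))$. A direct calculation then gives
\[
    S_F(\tilde\pi(f))_{p,q} = \alpha_{p^{-1}}\big(F(pq^{-1})(f(pq^{-1}))\big) = \mathcal{N}(F)(p,q)(\tilde\pi(f)_{p,q}) = \Phi(\tilde\pi(f))_{p,q}.
\]
Since $\tilde\pi(\ell^1(G,A))$ is norm-dense in $A \rtimes_{\alpha,r} G$ and $\Phi$ is completely bounded, $S_F$ extends by continuity to a completely bounded map on $A \rtimes_{\alpha,r} G$ agreeing with $\Phi|_{A \rtimes_{\alpha,r} G}$. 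The last assertion of Theorem \ref{re:Sphicpintermsofphi} ensures $\Phi$ is completely positive, hence so is $S_F$.

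For (i) $\Rightarrow$ (ii), fix $s_1,\dots,s_m \in G$ and $(a_{p,q}) \in M_m(A)^+$. Using the square root in the C*-algebra $M_m(A)$, factor $a_{p,q} = \sum_k b^k_p (b^k_q)^*$ with $b^k_p \in A$. For each $k$ the column vector $y_k = \big(\pi(\alpha_{s_p}(b^k_p))\lambda_{s_p}\big)_{p=1}^m$ produces a positive $y_k y_k^* \in M_m(A \rtimes_{\alpha,r} G)^+$ whose $(p,q)$-entry, via the covariance relation \eqref{eq_cov}, equals $\pi\big(\alpha_{s_p}(b^k_p (b^k_q)^*)\big)\lambda_{s_p s_q^{-1}}$. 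Applying the positive map $S_F^{(m)}$ and pairing with $\xi = \big(\delta_{s_p} \otimes \eta_p\big)_{p=1}^m$ for arbitrary $\eta_p \in H$, a direct computation yields
\[
    0 \le \ip{S_F^{(m)}(y_k y_k^*)\xi}{\xi} = \sum_{p,q=1}^m \ip{\mathcal{N}(F)(s_p,s_q)(b^k_p (b^k_q)^*) \eta_q}{\eta_p}.
\]
Summing over $k$ gives $(\mathcal{N}(F)(s_p,s_q)(a_{p,q}))_{p,q} \in M_m(\cl B(H))^+$, so $\mathcal{N}(F)$ is of positive type.

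For the norm equality, an approximate unit $(e_i)$ of $A$ gives, via $\pi$, an approximate unit of $A \rtimes_{\alpha,r} G$. The standard approximate-unit characterisation of the cb norm of a completely positive map then yields $\|S_F\|_{\cb} = \lim_i \|S_F(\pi(e_i))\| = \lim_i \|F(e)(e_i)\| = \|F(e)\|_{\cb}$. For $S_{\mathcal{N}(F)}$, Theorem \ref{re:Sphicpintermsofphi}(ii) gives $\|S_{\mathcal{N}(F)}\|_{\cb} = \sup_{s \in G}\|\mathcal{N}(F)(s,s)\|_{\cb}$, and the identity $\mathcal{N}(F)(s,s) = \alpha_{s^{-1}} \circ F(e) \circ \alpha_s$, combined with $\alpha_{s^{\pm 1}}$ being completely isometric, shows this supremum also equals $\|F(e)\|_{\cb}$. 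The main technical obstacle is expected to be the careful matrix bookkeeping for the identification $S_F = \Phi|_{A \rtimes_{\alpha,r} G}$ in (iii) $\Rightarrow$ (i), together with the choice of the specific positive element $y_k y_k^*$ that isolates the positive-type condition for $\mathcal{N}(F)$ in (i) $\Rightarrow$ (ii).
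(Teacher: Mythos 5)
Your proposal is correct and follows essentially the same route as the paper: (ii)$\Leftrightarrow$(iii) via Theorem \ref{re:Sphicpintermsofphi}, (iii)$\Rightarrow$(i) by identifying $S_F$ with the matrix-entrywise action of the canonical extension of $S_{\mathcal{N}(F)}$ on the crossed product, and (i)$\Rightarrow$(ii) by applying $S_F^{(m)}$ to positive matrices built from $\pi(\cdot)\lambda_{s_p}$ via the covariance relation. The only (harmless) deviations are cosmetic: you extract positivity by pairing with the vectors $\delta_{s_p}\otimes\eta_p$ where the paper conjugates by the diagonal matrix of $\lambda_{s_i^{-1}}$'s and uses faithfulness of $\pi$, and you prove $\|S_F\|_{\cb}=\|F(e)\|_{\cb}$ directly with an approximate unit rather than citing \cite[Theorem 3.8]{mtt}.
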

\begin{proof}
(i)$\Rightarrow$(ii)
Let $n\in \bb{N}$, $a_i\in A$ and $s_i\in G$, $i = 1,\dots,n$.
Using (\ref{eq_cov}), we obtain
\[
    (\pii{\alpha_{s_i}(a_i^*a_j)} \lambda_{s_i s_j^{-1}})_{i,j=1}^n
= (\lambda_{s_i}\pii{a_i^*a_j}\lambda_{s_j}^*)_{i,j=1}^n\in M_n(A\rtimes_{\alpha,r} G)^+ ,
\]
and hence
\[ \begin{split}
    X & \stackrel{\text{def}}{=} (\lambda_{s_i}\pii{\alpha_{s_i^{-1}}(F(s_i s_j^{-1})(\alpha_{s_i}(a_i^*a_j)))}\lambda_{s_j^{-1}})_{i,j=1}^n \\
    &= (\pii{F(s_i s_j^{-1})(\alpha_{s_i}(a_i^*a_j))}\lambda_{s_i s_j^{-1}})_{i,j=1}^n \in M_n(A\rtimes_{\alpha,r} G)^+.
\end{split} \]
Letting $\Lambda \in M_n(A\rtimes_{\alpha,r} G)$ be the diagonal matrix with diagonal entries
(in this order) $\lambda_{s_1^{-1}},\dots,\lambda_{s_n^{-1}}$,
we have that
\[
    (\pii{\alpha_{s_i^{-1}}(F(s_i s_j^{-1})(\alpha_{s_i}(a_i^*a_j)))})_{i,j=1}^n
= \Lambda X \Lambda^* \in M_n(A\rtimes_{\alpha,r} G)^+.
\]
Since every positive matrix in $M_n(A)$ is the sum of matrices of the form $(a_i^*a_j)_{i,j}$,
we have that $\cl N(F)$ is of positive type.

(ii)$\Rightarrow$(iii) follows from Theorem \ref{re:Sphicpintermsofphi}.

(iii)$\Rightarrow$(i)
It suffices to show that $S_{\cl N(F)}(\pii{a}\lambda_s)=\pii{F(s)(a)}\lambda_s$ for all $a\in A$ and $s\in G$.
Let $a\in A$ and $s\in G$.
Writing $(b_{p,q})_{p,q\in G}$ for the $A$-valued matrix of $\pii{a}\lambda_s$,
we have that 
\[
b_{p,q} =  \begin{cases}
                     \alpha_{p^{-1}}(a),  & \text{if } pq^{-1} = s; \\
                    0       & \text{otherwise.}
                \end{cases}
\]
Thus, $S_{\cl N(F)}(\pii{a}\lambda_s) = (c_{p,q})_{p,q\in G}$, where
\[
    c_{p,q} =  \begin{cases}
                     \cl N(F)(p,q)(\alpha_{p^{-1}}(a)),  & \text{if } pq^{-1} = s; \\
                    0       & \text{otherwise;}
                \end{cases}
\]
that is, $S_{\cl N(F)}(\pii{a}\lambda_s) =  \pii{F(s)(a)}\lambda_s$.

Finally, the equalities involving the norms follow from \cite[Theorem 3.8]{mtt} and Theorem~\ref{re:Sphicpintermsofphi}.
\end{proof}

\begin{corollary}\label{c_lircp}
Let $\nph : G\times G\to CB(A)$ be a Schur $A$-multiplier. The following are equivalent:
\begin{enumerate}[i.]
    \item $S_{\nph}$ is completely positive and leaves $A\rtimes_{\alpha,r} G$ invariant;
    \item $\nph = \cl N(F)$ for some completely positive Herz-Schur $(A,G, \alpha)$-multiplier  $F : G\to CB(A)$.
\end{enumerate}
\end{corollary}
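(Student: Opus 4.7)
My plan is to obtain (ii)$\Rightarrow$(i) essentially for free from Theorem~\ref{co:equivalencesofpositiveHSmultiplier}, and to tackle the substantive direction (i)$\Rightarrow$(ii) by identifying the correct candidate $F$ from the $(e,t\inv)$-slice of $\nph$ and then deducing $\nph = \cl N(F)$ from the invariance hypothesis by applying the translation formula \eqref{translationformula} to $S_\nph$ of the generator $\pii{a}\lambda_s$.

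For (ii)$\Rightarrow$(i), once $F$ is a completely positive Herz-Schur multiplier, Theorem~\ref{co:equivalencesofpositiveHSmultiplier} immediately yields that $\cl N(F)$ is a Schur $A$-multiplier with $S_{\cl N(F)}$ completely positive, and the computation in the proof of that theorem gives the identity $S_{\cl N(F)}(\pii{a}\lambda_s) = \pii{F(s)(a)}\lambda_s$. Since these elements span the dense subspace $\tilde\pi(\ell^1(G,A))$ of $A\rtimes_{\alpha,r} G$, and both $S_{\cl N(F)}$ and $S_F$ are norm continuous and agree there, $S_{\cl N(F)}$ preserves $A\rtimes_{\alpha,r} G$.

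For the converse I would define $F : G \to CB(A)$ by $F(t)(a) := \nph(e, t\inv)(a)$; this takes values in $A$ because $\nph$ does so by hypothesis. To show $\nph = \cl N(F)$, apply $S_\nph$ to an arbitrary $x := \pii{a}\lambda_s$ with $a\in A$ and $s\in G$: its $A$-valued matrix has entries $x_{p,q} = \alpha_{p\inv}(a)$ exactly when $pq\inv = s$ and is zero otherwise, so $y := S_\nph(x)$ has entries $y_{p,q} = \nph(p,q)(\alpha_{p\inv}(a))$ at those positions. Since $y \in A\rtimes_{\alpha,r} G$ by hypothesis, the translation formula \eqref{translationformula} forces $y_{p,q} = \alpha_{p\inv}(y_{e, qp\inv}) = \alpha_{p\inv}(\nph(e, qp\inv)(a))$; setting $b = \alpha_{p\inv}(a)$ and letting $s$ range over $G$ so that $(p,q)$ runs over all of $G \times G$ yields
\[
    \nph(p,q)(b) = \alpha_{p\inv}\bigl(\nph(e, qp\inv)(\alpha_p(b))\bigr) = \alpha_{p\inv}\bigl(F(pq\inv)(\alpha_p(b))\bigr) = \cl N(F)(p,q)(b),
\]
which is exactly $\nph = \cl N(F)$. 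The claim that $F$ is a completely positive Herz-Schur multiplier is then immediate from Theorem~\ref{co:equivalencesofpositiveHSmultiplier}, since by assumption $S_\nph = S_{\cl N(F)}$ is completely positive. The only real obstacle is the matrix bookkeeping required to isolate $F$ and invert $\cl N$.
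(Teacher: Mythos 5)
Your proof is correct and follows essentially the same route as the paper: the direction (ii)$\Rightarrow$(i) is read off from Theorem~\ref{co:equivalencesofpositiveHSmultiplier} together with the identity $S_{\cl N(F)}(\pii{a}\lambda_s)=\pii{F(s)(a)}\lambda_s$, and for (i)$\Rightarrow$(ii) the paper likewise extracts $F$ from the invariance of $A\rtimes_{\alpha,r}G$ via the diagonal structure of crossed-product elements (your explicit use of \eqref{translationformula} with $p=e$, giving $F(t)=\nph(e,t^{-1})$, is the same definition the paper makes, just evaluated at a particular representative of the coset $pq^{-1}=s$), and then invokes Theorem~\ref{co:equivalencesofpositiveHSmultiplier} to conclude complete positivity of $F$.
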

\begin{proof}
(i)$\Rightarrow$(ii)
The fact that $S_{\nph}$ leaves $A\rtimes_{\alpha,r} G$ invariant shows that
if $s\in G$ then
for every $a\in A$ there exists $b\in A$ such that
\[
    \nph(p,q)(\alpha_{p^{-1}}(a)) = \alpha_{p^{-1}}(b), \quad\mbox{ whenever } pq^{-1} = s.
\]
Thus, the function $(p,q)\to \alpha_p(\nph(p,q)(\alpha_{p^{-1}}(a)))$ depends only on $pq^{-1}$.
Set $F(s)(a) = \alpha_p(\nph(p,q)(\alpha_{p^{-1}}(a)))$, where $p,q\in G$ are such that $pq^{-1} = s$.
Then $F : G\to CB(A)$ is well-defined and $\cl N(F) = \nph$.
Since $S_{\nph}$ is completely positive, $F$ is a completely positive Herz-Schur $(A,G, \alpha)$-multiplier.

(ii)$\Rightarrow$(i) follows from Theorem \ref{co:equivalencesofpositiveHSmultiplier}.
\end{proof}

\begin{remarks} 
{\rm
{\bf (i)} It follows from Theorem \ref{re:Sphicpintermsofphi} that if
$F : G\to CB(A)$ is a bounded function such that
the function $\cl N(F)$ is of positive type then
$F$ is automatically a Herz-Schur $(A,G,\alpha)$-multiplier.

\smallskip

\noindent {\bf (ii)} Suppose that $F : G\to CB(A)$ is a Herz-Schur $(A,G,\alpha)$-multiplier. The positivity of the map $S_F : A\rtimes_{\alpha,r} G \to A\rtimes_{\alpha,r} G$
does not imply its complete positivity. This becomes evident if one considers the case where $G = \{e\}$ and
$A$ is any $C^*$-algebra that admits positive maps that are not completely positive ({\it e.g.}\ $A = M_n$ for $n\geq 2$).}
\end{remarks}

\medskip

We next compare the notion of completely positive Herz--Schur $(A,G,\alpha)$-multipliers to other similar notions
that can be found in the literature.
Let $(A,G,\alpha)$ be a $C^*$-dynamical system.
\begin{itemize}
    \item Let us call a function $T : G \times A \to A$, which is linear in the second variable, \emph{positive definite in the sense of B\'edos-Conti}, or \emph{BC positive definite}, if for any $n \in \NN$, any $s_1 , \ldots , s_n \in G$, and any $a_1 , \ldots , a_n \in A$, the matrix
        \[
            \Big( \alpha_{s_i} \big( T(s_i\inv s_j , \alpha_{s_i\inv}(a_i^* a_j)) \big) \Big)_{i,j=1}^n
        \]
is a positive element of $M_n(A)$. This definition was given by B\'edos and Conti~\cite[Definition 4.7]{bc2} in the more general case of a twisted $C^*$-dynamical system; here we consider only the trivial twist and have simplified the definition accordingly.
    \item Let us call a function $h : G \to \mathcal{Z}(A)$ \emph{positive definite in the sense of Dong-Ruan}, or \emph{DR positive definite}, if, for any $n \in \NN$ and any $s_1 , \ldots , s_n \in G$, the matrix
            \[
                \Big( \alpha_{s_j} \big( h (s_i\inv s_j) \big) \Big)_{i,j=1}^n
            \]
        is a positive element of $M_n(A)$. This definition was given by Dong and Ruan~\cite[p.\ 436]{dong_ruan}; only centre-valued functions are considered because this is a necessary condition for such a map to be a `multiplier' of the reduced crossed product in the sense of \cite{dong_ruan}.
    \item Let us call a function $\varphi : G \to A$ \emph{$\alpha$-positive definite}, if for any $n \in \NN$ and any $s_1 , \ldots , s_n \in G$ the matrix
            \[
                \Big( \alpha_{s_i} \big( \varphi (s_i\inv s_j) \big) \Big)_{i,j=1}^n
            \]
        is a positive element of $M_n(A)$. This definition was given in \cite[D\'efinition 2.1]{Claire}, and used in \cite[p. 3]{bc2}.
\end{itemize}

\noindent
We comment on how the above notions compare to Definition~\ref{de:positiveHSmult}; we will not consider DR positive definiteness since it is similar to $\alpha$-positive definiteness.

One can easily show that a function $T : G \times A \to A$, which is linear in the second variable,
is a completely bounded multiplier of $(A,G,\alpha)$ in the sense of B\'edos--Conti~\cite{bc2} if and only if the function
\[
    F_T : G \to CB(A) ;\ F_T(t)(a) \stackrel{\text{def}}{=} T(t,a) , \quad t \in G,\ a \in A ,
\]
is a Herz--Schur $(A,G,\alpha)$-multiplier.
Let $n \in \NN$, $s_1 , \ldots , s_n \in G$, and $a_1 , \ldots , a_n \in A$;
then
\[ \begin{split}
    \Big( \alpha_{s_i\inv} \big( T(s_i s_j^{-1} , \alpha_{s_i}(a_i^* a_j) ) \big) \Big)_{i,j=1}^n
    &= \Big( \alpha_{s_i\inv} \big( F_T(s_i s_j\inv) \big( \alpha_{s_i}(a_i^* a_j) \big) \big) \Big)_{i,j=1}^n \\
    &= \big( \mathcal{N}(F_T)(s_i , s_j) (a_i^* a_j) \big)_{i,j=1}^n.
\end{split} \]
This implies that $T$ is BC positive definite if and only if $F_T$ is a Herz--Schur $(A,G,\alpha)$-multiplier of positive type, since any positive matrix in $M_n(A)$ is a sum of matrices of the form $(a_i^* a_j)_{i,j=1}^n$.

Now suppose that $\varphi : G \to \cl Z(A)$ is $\alpha$-positive definite. Let $F_\varphi : G \to CB (A)$ be given by
\[
    F_\varphi (t)(a) = \varphi(t) a , \quad t \in G,\ a \in A .
\]
Let $n \in \NN$, $s_1 , \ldots , s_n \in G$, and $(a_{i,j})$ a positive matrix in $M_n(A)$; then
\[ \begin{split}
    \big( \mathcal{N}(F_\varphi)(s_i,s_j)(a_{i,j}) \big)_{i,j=1}^n &= \Big( \alpha_{s_i\inv}\big( F_\varphi(s_i s_j\inv) \big( \alpha_{s_i}(a_{i,j}) \big) \big) \Big)_{i,j=1}^n \\
        &= \Big( \alpha_{s_i} \big( \nph(s_i s_j\inv) \big) a_{i,j} \Big)_{i,j=1}^n ,
\end{split} \]
which is positive as the Schur product of  a positive matrix in $M_n(\cl Z(A))$ and a positive matrix in $M_n(A)$ (as follows from an elementary calculation;
note that the argument would not be valid for general $A$-valued maps).
Conversely, if $\mathcal{N}(F_\varphi)$ is a Herz-Schur $(A,G,\alpha)$-multiplier of positive type then,
since the matrix in $M_n(A)$ with all entries equal to $\unA$ is positive, we have for any $n \in \NN$ and any $s_1 , \ldots , s_n \in G$,
\[
    0 \leq \Big( \mathcal{N}(F_\varphi)(s_i,s_j)(1_A) \Big)_{i,j=1}^n = \Big( \alpha_{s_i^{-1}} \big( \varphi (s_i s_j\inv) \big) \Big)_{i,j=1}^n ,
\]
which shows that $\varphi$ is an $\alpha$-positive definite function.

\bigskip

We finish this section by exhibiting a class of examples of positive Herz-Schur $(A,G,\alpha)$-multipliers.

\begin{proposition}\label{hsh}
Let $F$ be a finite subset of $G$ and $\Phi\in CB(A)$.
The map $h_F : G\to \cl B(A)$, given by
\[
    h_F(s)(a) = \sum_{p\in F\cap sF}\alpha_p\circ\Phi\circ\alpha_p^{-1}(a), \quad a\in A,
\]
is a Herz-Schur $(A,G,\alpha)$-multiplier with $\|S_{h_F}\|_{\rm cb}\leq |F| \|\Phi\|_{\rm cb}$.
Moreover, $h_F$ is a completely positive Herz-Schur $(A,G,\alpha)$-multiplier if the map $\Phi$ is completely positive.
\end{proposition}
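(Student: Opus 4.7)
The plan is to obtain an explicit formula for $S_{h_F}(\tilde\pi(f))$ on the dense subspace $\tilde\pi(\ell^1(G,A))$, recognise it as the composition of three completely bounded maps, and then read off both the norm bound and the complete positivity assertion from the properties of the individual factors.

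For $f \in \ell^1(G,A)$, I would start by substituting the definition of $h_F$ into $\tilde\pi(h_F \cdot f)$ and applying the covariance relation \eqref{eq_cov} to replace each $\pi(\alpha_p(x))$ with $\lambda_p \pi(x) \lambda_p^*$. Reindexing the pair $(p,s)$ by $(p,q)$ via the substitution $s = pq^{-1}$ (note that $p \in F \cap sF$ is then equivalent to $(p,q) \in F \times F$, and that $\lambda_p^*\lambda_s = \lambda_{q^{-1}}$), I expect to obtain
\[
    S_{h_F}(\tilde\pi(f)) = \sum_{p,q \in F} \lambda_p \, \pi(\Phi(\alpha_{p^{-1}}(f(pq^{-1})))) \, \lambda_{q^{-1}}.
\]
Then by \eqref{translationformula}, the element $g_{p,q} := \alpha_{p^{-1}}(f(pq^{-1}))$ is precisely the $(p,q)$-entry of the $A$-valued matrix of $\tilde\pi(f)$; equivalently, $(g_{p,q})_{p,q \in F}$ is the $F \times F$ corner of $\tilde\pi(f)$ viewed in $M_{|F|}(A)$.

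With this identification, I would factor $S_{h_F} = M_{R_F} \circ (\pi \circ \Phi)^{(|F|)} \circ \Psi_F$, where $\Psi_F \colon A\rtimes_{\alpha,r} G \to M_{|F|}(A)$ is the $F \times F$ corner compression, $(\pi\circ\Phi)^{(|F|)}$ is the amplification of the completely bounded map $\pi \circ \Phi$, and $M_{R_F}(Y) := R_F Y R_F^*$ for the row operator $R_F \colon \mathcal{H}^{|F|} \to \mathcal{H}$ with entries $(\lambda_p)_{p \in F}$. The \cb-norms of these factors are controlled by $1$, $\|\Phi\|_{\cb}$, and $\|R_F\|^2 \leq |F|$ respectively, the last estimate coming from the Cauchy--Schwarz bound $\|\sum_p \lambda_p \xi_p\|^2 \leq |F| \sum_p \|\xi_p\|^2$. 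This will show that $S_{h_F}$ extends to a completely bounded map on $A \rtimes_{\alpha,r} G$ with $\|S_{h_F}\|_{\cb} \leq |F| \, \|\Phi\|_{\cb}$; that the image lies in $A \rtimes_{\alpha,r} G$ is already visible from the closed-form expression displayed above.

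When $\Phi$ is completely positive, each of the three factors is completely positive: $\Psi_F$ is a corner compression, $(\pi \circ \Phi)^{(|F|)}$ is the amplification of a cp map, and $M_{R_F}$ has the form $Y \mapsto R_F Y R_F^*$. Hence $S_{h_F}$ is cp, as required by Definition \ref{de:positiveHSmult}. The main technical point on which one must be careful is the identification of $\alpha_{p^{-1}}(f(pq^{-1}))$ as the $(p,q)$-matrix entry of $\tilde\pi(f)$ via \eqref{translationformula}; once this identification is in place, the rest is a clean factorisation argument together with elementary bookkeeping of completely bounded norms.
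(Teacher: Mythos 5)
Your argument is correct, and it follows a genuinely different route from the one in the paper. The paper proves the proposition by passing to the associated Schur $A$-multiplier: it computes $\cl N(h_F)(s,t)(a)=\sum_{p\in G}\chi_{s^{-1}F}(p)\,\alpha_p(\Phi(\alpha_p^{-1}(a)))\,\chi_{t^{-1}F}(p)$, chooses completely bounded representations $\alpha_p\circ\Phi\circ\alpha_p^{-1}(a)=V_p^*\pi_p(a)W_p$, assembles column operators $V(s),W(t)$ of norm at most $|F|^{1/2}\|\Phi\|_{\cb}^{1/2}$, and then invokes Theorem \ref{th_mtt} together with the transference theorem of \cite{mtt} (and, for the positive case with $V_p=W_p$, Theorems \ref{re:Sphicpintermsofphi} and \ref{co:equivalencesofpositiveHSmultiplier}). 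You instead work directly on the crossed product: your identity $S_{h_F}(\tilde\pi(f))=\sum_{p,q\in F}\lambda_p\,\pi(\Phi(\alpha_{p^{-1}}(f(pq^{-1}))))\,\lambda_q^*$ is correct (the reindexing $s=pq^{-1}$ and the identification of $\alpha_{p^{-1}}(f(pq^{-1}))$ with the $(p,q)$-entry of $\tilde\pi(f)$ via (\ref{translationformula}) both check out), and the factorisation through the $F\times F$ corner compression, the amplification of $\pi\circ\Phi$, and conjugation by the row operator $R_F$ with $R_FR_F^*=|F|\,I$ yields at once the extension to $A\rtimes_{\alpha,r}G$, the bound $\|S_{h_F}\|_{\cb}\leq|F|\,\|\Phi\|_{\cb}$, and complete positivity when $\Phi$ is completely positive, since all three factors are then completely positive. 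What your approach buys is a self-contained, elementary argument that bypasses the Schur-multiplier machinery (Theorem \ref{th_mtt}, the results of \cite{mtt}, and the positivity transference results of Section 2) entirely; what the paper's approach buys is an explicit Stinespring-type representation of $\cl N(h_F)$ itself, exhibiting it directly as a Schur $A$-multiplier of positive type, which is in the spirit of the rest of the section. The only point worth stating explicitly in a written-up version is that the composite map is defined on all of $A\rtimes_{\alpha,r}G$ (using that $x_{p,q}\in A$ for every element of the crossed product) and agrees with $S_{h_F}$ on the dense subspace $\tilde\pi(\ell^1(G,A))$, so it is the canonical extension, and that its range lies in $A\rtimes_{\alpha,r}G$ because $\lambda_p\pi(b)\lambda_q^*=\pi(\alpha_p(b))\lambda_{pq^{-1}}$; you indicate both points, so this is only a matter of presentation.
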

\begin{proof}
We have
\[ \begin{split}
    \mathcal N(h_F)(s,t)(a) &= \alpha_{s^{-1}}(h_F(st^{-1})(\alpha_s(a))) =\sum_{p\in F\cap st^{-1}F}\alpha_{s^{-1}p}(\Phi(\alpha_{p^{-1}s}(a))) \\
    &= \sum_{p\in (s^{-1}F)\cap (t^{-1}F)}\alpha_p(\Phi(\alpha_{p}^{-1}(a))) \\
    &= \sum_{p\in G}\chi_{s^{-1}F}(p)\alpha_p(\Phi(\alpha_{p}^{-1}(a)))\chi_{t^{-1}F}(p)
\end{split} \]
(here we denote by $\chi_{E}$ the characteristic function of a set $E$).
As $\alpha_p\circ\Phi\circ\alpha_p^{-1}$ is a completely bounded map on $A$ for each $p\in G$, there exist a Hilbert space $H_p$, $*$-representation $\pi_p:A\to \cl B(H_p)$ and bounded operators $V_p,W_p\in B(H,H_p)$ such that
$$(\alpha_p\circ\Phi\circ\alpha_p^{-1})(a)=V_p^*\pi_p(a)W_p, \ \ \ a\in A,$$
and $\|\alpha_p\circ\Phi\circ\alpha_p^{-1}\|_{\rm cb}=\|\Phi\|_{\rm cb}=\|V_p\|^2=\|W_p\|^2$.

Let $\rho=\oplus_{p\in G}\pi_p:A\to\cl B(\oplus_{p\in G} H_p)$,
and $V(s), W(s) : H\to\oplus_{p\in G} H_p$ be the column operators given by
$V(s) = (\chi_{s^{-1}F}(p)V_p)_{p\in G}$ and $W(s) = (\chi_{s^{-1}F}(p)W_p)_{p\in G}$.
The latter are bounded operators with norms
\[
    \|V(s)\| = \left\|\sum_{p\in s^{-1}F}V_p^*V_p\right\|^{1/2}\leq |F|^{1/2} \|\Phi\|_{\rm cb}^{1/2}
\]
and
\[
    \|W(t)\| = \left\|\sum_{p\in t^{-1}F}W_p^*W_p\right\|^{1/2}\leq |F|^{1/2} \|\Phi\|_{\rm cb}^{1/2}.
\]
Moreover,
\[
    \mathcal N(h_F)(s,t)(a)=V(s)^* \rho(a) W(t), \ \ \ s, t\in G.
\]
Hence, by Theorem \ref{th_mtt}, $\mathcal N(h_F)$ is a Schur $A$-multiplier with $\|\mathcal N(h_F)\|_{\rm m}\leq |F|\|\Phi\|_{\rm cb}$.
By \cite[Theorem  3.8]{mtt}, $h_F$ is a Herz-Schur $(A,G,\alpha)$-multiplier.
If $\Phi$ is completely positive  then we can choose $V_p=W_p$  and hence $V(s)=W(s)$ for
every $s\in G$. In this case, by Theorem~\ref{re:Sphicpintermsofphi} and Theorem~\ref{co:equivalencesofpositiveHSmultiplier}, $h_F$ is a completely positive Herz-Schur $(A,G,\alpha)$-multiplier.
\end{proof}


\section{The Haagerup property}
\label{s_hp}

\noindent
In this section we let $A$ be a \emph{unital} $C^*$-algebra,
whose identity will be denoted by $\unA$, equipped with a faithful tracial state $\tau$.
We denote by $L^2(A,\tau)$ the completion of $A$ with respect to the
norm $\|a\|_{2,\tau} := (\tau(a^*a))^{1/2}$.
We say that a map $\Phi : A\to A$ is \emph{$L^2$-bounded} if there exists a constant $C > 0$ such that
$\|\Phi(a)\|_{2,\tau} \leq C\|a\|_{2,\tau}$ for every $a\in A$.
If this happens, there exists a (unique) bounded operator
$T_{\Phi} : L^2(A,\tau)\to L^2(A,\tau)$ with the property that $T_{\Phi}(a) = \Phi(a)$ whenever $a\in A$.
An $L^2$-bounded map $\Phi$ will be called \emph{$L^2$-compact} if $T_{\Phi}$ is compact.
An application of the Cauchy-Schwarz inequality shows that a unital completely positive map $\Phi$
satisfying the condition $\tau\circ\Phi\leq\tau$ is $L^2$-bounded and that $T_{\Phi}$ is a contraction.

The following definition is due to Dong~\cite{dong}.

\begin{definition}\label{d_hpcst}
The pair $(A,\tau)$, where $A$ is a unital $C^*$-algebra and $\tau$ is a faithful tracial state on $A$, is said
to possess the \emph{$C^*$-algebra Haagerup property}
if there exists a net $(\Phi_i)_{i\in I}$ of unital completely positive maps on $A$ such that
\begin{itemize}
  \item[(i)] $\tau \circ \Phi_i \leq \tau$ for all $i\in I$;
  \item[(ii)] $\Phi_i$ is $L^2$-compact for all $i\in I$;
  \item[(iii)] $\|\Phi_i(x) - x\|_{2,\tau}\to 0$, for all $x\in A$.
\end{itemize}
\end{definition}

Let $(A,G,\alpha)$ be a $C^*$-dynamical system, where the group $G$ is discrete,
$A$ has a faithful tracial state $\tau$, and $\alpha$ is $\tau$-preserving.
Following Dong \cite{dong},
we will consider the reduced crossed product $A \rtimes_{\alpha ,r} G$ endowed with the induced trace $\tau'$,
given by
\[
    \tau' (x) = \tau(\cl E(x)), \quad x\in A\rtimes_{\alpha,r} G.
\]

\begin{lemma}\label{l_autb}
Let $F : G\to CB(A)$ be a completely
positive Herz-Schur $(A,G,\alpha)$-multiplier such that $\tau\circ F(e)\leq \tau$ and $F(e)(\unA) = \unA$.
Then $F(t)$ is $L^2$-bounded and $T_{F(t)}$ is a contraction, for each $t\in G$.
\end{lemma}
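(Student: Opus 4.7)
The plan is to exploit the fact that $S_F$ is a unital completely positive map on $A\rtimes_{\alpha,r}G$ and apply the Kadison--Schwarz inequality to elements of the form $\pi(a)\lambda_t$.

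First I would verify that $S_F$ is indeed unital completely positive. By Theorem~\ref{co:equivalencesofpositiveHSmultiplier}, $\cl N(F)$ is of positive type. Taking $m=1$, $x_1=e$ in Definition~\ref{def:phipositive} shows that $F(e) = \cl N(F)(e,e)$ is positive; taking all $x_p=e$ there shows $F(e)$ is in fact completely positive. Since $S_F(\pi(1_A)) = \pi(F(e)(1_A)) = \pi(1_A)$, the map $S_F$ is unital, so by hypothesis it is unital completely positive on $A\rtimes_{\alpha,r}G$.

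Next I would apply the Kadison--Schwarz inequality, $S_F(x)^*S_F(x) \leq S_F(x^*x)$, to $x = \pi(a)\lambda_t$. Using the covariance relation \eqref{eq_cov}, one computes $x^*x = \pi(\alpha_{t^{-1}}(a^*a))$, an element of $\pi(A)$, so
\[
    S_F(x^*x) = \pi\bigl(F(e)(\alpha_{t^{-1}}(a^*a))\bigr),
\]
while $S_F(x)^*S_F(x) = \lambda_t^*\pi(F(t)(a)^*F(t)(a))\lambda_t = \pi\bigl(\alpha_{t^{-1}}(F(t)(a)^*F(t)(a))\bigr)$, again by \eqref{eq_cov}. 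The injectivity of $\pi$ on $A$ (which follows since $A\subseteq\cl B(H)$ nondegenerately) then yields the pointwise inequality
\[
    \alpha_{t^{-1}}\bigl(F(t)(a)^*F(t)(a)\bigr) \leq F(e)\bigl(\alpha_{t^{-1}}(a^*a)\bigr)
\]
in $A^+$.

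Finally, I would apply $\tau$ to both sides. Using $\tau\circ\alpha_{t^{-1}}=\tau$ on the left, and on the right combining $\tau\circ F(e)\leq\tau$ with $\tau\circ\alpha_{t^{-1}} = \tau$, one obtains $\|F(t)(a)\|_{2,\tau}^2\leq\|a\|_{2,\tau}^2$, which is exactly the contractivity of $T_{F(t)}$. I do not anticipate a genuine obstacle: the argument is a transcription of the classical proof that a trace-reducing unital completely positive map is $L^2$-contractive, with the extra twist $\lambda_t$ absorbed by covariance, and the hypothesis $F(e)(1_A)=1_A$ enters precisely to render $S_F$ unital so that Kadison--Schwarz is available in its cleanest form.
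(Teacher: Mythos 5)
Your argument is correct, but it is not the route the paper takes. You work at the level of the crossed product: since $S_F$ is completely positive by Definition~\ref{de:positiveHSmult} and unital (because $F(e)(\unA)=\unA$ and $\pi(\unA)$ is the unit of $A\rtimes_{\alpha,r}G$, $A$ being unital here), you may indeed apply the Kadison--Schwarz inequality to $x=\pii{a}\lambda_t$, use the covariance relation \eqref{eq_cov} to identify both $x^*x$ and $S_F(x)^*S_F(x)$ with elements of $\pi(A)$, pull the inequality back through the faithful $*$-homomorphism $\pi$ (which reflects positivity), and finish with $\tau\circ F(e)\leq\tau$ and the $\tau$-invariance of $\alpha$ (a standing hypothesis of Section~3, so legitimately available). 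The paper instead never leaves $A$: it exploits the positive-type property of $\cl N(F)$ directly, first deriving the symmetry identity \eqref{eq_rst} from hermitianness and then feeding a $2\times 2$ positive matrix built from $\unA$ and $a^*$ into Definition~\ref{def:phipositive} to obtain $F(t)(b)F(t)(b)^*\leq F(e)(bb^*)$, after which only traciality of $\tau$ and $\tau\circ F(e)\leq\tau$ are used. Your version is shorter and is a clean transcription of the classical fact that trace-nonincreasing unital completely positive maps are $L^2$-contractive, at the price of invoking the operator-algebraic Schwarz inequality and the crossed-product machinery; the paper's version stays intrinsic to the multiplier data (only the positive-type condition on $\cl N(F)$ is used) and records en route the identity \eqref{eq_rst}. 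Two small remarks: your opening detour verifying that $F(e)$ is completely positive is not needed for unitality of $S_F$ --- only $F(e)(\unA)=\unA$ matters there --- and the complete positivity of $S_F$ is simply the definition of a completely positive Herz--Schur multiplier, so no appeal to Theorem~\ref{co:equivalencesofpositiveHSmultiplier} is required for that point.
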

\begin{proof}
Set $\nph = \cl N(F)$; thus, if $s,t\in G$ and $a\in A$ then
$\nph(s,t)(a) = \alpha_{s^{-1}}(F(st^{-1})(\alpha_s(a)))$.
Since $\nph$ is of positive type, it is hermitian.
Thus, if $s,t\in G$ and $b\in A$, we have
$\nph(t,s)(b^*) = \nph(s,t)(b)^*$, that is,
\[
    \alpha_{t^{-1}}(F(ts^{-1})(\alpha_t(b^*))) = \alpha_{s^{-1}}(F(st^{-1})(\alpha_s(b)))^*, \quad s,t\in G, b\in A.
\]
Letting $a = \alpha_t(b)$ and using the fact that $\alpha$ is a homomorphism, we
obtain
\[
    F(ts^{-1})(a^*) = \alpha_{ts^{-1}}(F(st^{-1})(\alpha_{st^{-1}}(a)))^*,
\]
and so
\[
    F(r)(a^*) = \alpha_{r}(F(r^{-1})(\alpha_{r^{-1}}(a)))^*, \quad r\in G, a\in A;
\]
the latter identity can also be rewritten as
\begin{equation}\label{eq_rst}
F(r)(a)^* = \alpha_r(F(r^{-1})(\alpha_{r^{-1}}(a)^*)), \quad r\in G, a\in A.
\end{equation}

Fix $a\in A$ and let $a_1 = \unA$ and $a_2 = a^*$.
In (\ref{eq_pt}),  set $m=2$, $n= 1$, $x_1 = e$, $x_2 = t$, $a^{p,q} = a_p a_q^*$, $p,q = 1,2$.
Since $\nph$ is of positive type, we have
\[
    \left(\begin{matrix}
        F(e)(\unA) & F(t\inv)(a) \\
        \alpha_{t^{-1}}(F(t)(\alpha_t(a^*))) & \alpha_{t^{-1}}(F(e)(\alpha_t(a^*a)))
    \end{matrix}\right) \in M_2(A)^+.
\]
Since $F(e)(\unA) = \unA$, and $\alpha_t$ is unital and completely positive,
\[
    \left(\begin{matrix}
        \unA & \alpha_t(F(t^{-1})(a)) \\
        F(t)(\alpha_t(a^*)) & F(e)(\alpha_t(a^*a))
    \end{matrix}\right)\in M_2(A)^+;
\]
writing $b = \alpha_t(a^*)$, we have
\[
    \left(\begin{matrix}
        \unA & \alpha_t(F(t^{-1})(\alpha_{t^{-1}}(b^*))) \\
        F(t)(b) & F(e)(bb^*)
    \end{matrix}\right) \in M_2(A)^+.
\]
In view of (\ref{eq_rst}),
\[
    \left(\begin{matrix}
        \unA & F(t)(b)^* \\
        F(t)(b) & F(e)(bb^*)
    \end{matrix}\right) \in M_2(A)^+
\]
and hence
\[
    F(t)(b) F(t)(b)^*\leq F(e)(bb^*) .
\]
It follows that
\[ \begin{split}
    \|F(t)(b)\|_{2,\tau}^2 &= \tau(F(t)(b)^* F(t)(b)) \leq \tau(F(e)(bb^*)) \\
    &\leq \tau(bb^*) = \tau(b^*b)= \|b\|_{2,\tau}^2,
\end{split} \]
for every $b\in A$;
thus, $F(t)$ is $L^2$-bounded and $T_{F(t)}$ is a contraction.
\end{proof}


In view of Lemma \ref{l_autb}, condition (i) in the following definition
implies that $F_i(s)$ is $L^2$-bounded, $s\in G$, $i\in I$, and hence it makes sense to
formulate conditions (ii) and (iii) in the subsequent definition.

\begin{definition}\label{de:Haageruppropertysystem}
Let $(A,G,\alpha)$ be a $C^*$-dynamical system,  where the group $G$ is discrete, $A$ has a faithful tracial state $\tau$,
and $\alpha$ is $\tau$-preserving. We say that $(A,G,\alpha, \tau)$ has the \emph{Haagerup property}
if there is a net $(F_i)_{i\in I}$ of completely positive Herz-Schur $(A,G,\alpha)$-multipliers such that
\begin{enumerate}[i.]
    \item $F_i(e)$ is unital and $\tau \circ F_i(e) \leq \tau$, $i\in I$;
    \item $F_i(t)$ is $L^2$-compact, $t \in G$;
    \item the function $s\to \|T_{F_i(s)}\|$ vanishes at infinity, $i\in I$;
    \item $\|F_i(t)(a) - a \|_{2,\tau} \to 0$ for all $t \in G$ and all $a \in A$.
\end{enumerate}
\end{definition}

We note that, in the case $A = \CC$,
Definition~\ref{de:Haageruppropertysystem} reduces to the definition of the Haagerup property for the group $G$.

Let $\Phi$ be a bounded linear map on $A\rtimes_{\alpha,r} G$,
and $h_\Phi : G\to \cl B(A)$ be the function given by
\begin{equation}\label{eq_hPhi}
    h_\Phi(s)(a) = \cl E(\Phi(\pii{a} \lambda_s)\lambda_s^*).
\end{equation}
The next proposition can be found in \cite{bc2}; for convenience of the reader we include a proof.

\begin{proposition}\label{p_cphs}
If $\Phi$ is a completely positive map on $A\rtimes_{\alpha,r} G$
then $h_{\Phi}$ is a completely positive Herz-Schur $(A,G,\alpha)$-multiplier.
\end{proposition}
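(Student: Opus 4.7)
The plan is to invoke Theorem~\ref{co:equivalencesofpositiveHSmultiplier}: it suffices to show that $\mathcal N(h_\Phi)$ is of positive type, which will simultaneously give that $h_\Phi$ takes values in $CB(A)$ (through Remark~\ref{cbautomatic}) and that $S_{h_\Phi}$ is completely positive. (Boundedness of $h_\Phi$ as a map into $CB(A)$ follows formally from the factorisation $h_\Phi(s) = \mathcal E \circ R_{\lambda_s^*} \circ \Phi \circ L_{\lambda_s} \circ \pi$, each factor of which is completely bounded.)

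First I would compute $\mathcal N(h_\Phi)$ explicitly. Substituting $a' = \alpha_s(a)$ in the definition of $h_\Phi$, then using the covariance relation~\eqref{eq_cov} to rewrite $\pi(\alpha_s(a)) = \lambda_s\pi(a)\lambda_s^*$, and finally pulling the outer $\alpha_{s^{-1}}$ through $\mathcal E$ via~\eqref{eq_el}, one obtains the identity
\[
    \mathcal N(h_\Phi)(s,t)(a) = \mathcal E\bigl(\lambda_{s^{-1}}\,\Phi(\lambda_s\pi(a)\lambda_{t^{-1}})\,\lambda_t\bigr),\quad s,t\in G,\ a\in A.
\]

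Next I would check positive type. Since every element of $M_n(A)^+$ is a sum of matrices of the form $(c_p^* c_q)_{p,q}$ with $c_1,\dots,c_n\in A$, by linearity it suffices to verify, given $s_1,\dots,s_n\in G$ and $c_1,\dots,c_n\in A$, that $\bigl(\mathcal N(h_\Phi)(s_p,s_q)(c_p^*c_q)\bigr)_{p,q=1}^n$ is positive in $M_n(A)$. Setting $Z_p = \pi(c_p)\lambda_{s_p^{-1}}\in A\rtimes_{\alpha,r}G$, a direct computation shows
\[
    \lambda_{s_p}\pi(c_p^*c_q)\lambda_{s_q^{-1}} = Z_p^* Z_q,
\]
so the matrix $\bigl(\lambda_{s_p}\pi(c_p^*c_q)\lambda_{s_q^{-1}}\bigr)_{p,q}$ is positive in $M_n(A\rtimes_{\alpha,r}G)$. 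Complete positivity of $\Phi$ preserves this positivity; conjugation by the diagonal matrix $\Lambda = \mathrm{diag}(\lambda_{s_1^{-1}},\dots,\lambda_{s_n^{-1}})$ also preserves it and produces a matrix whose $(p,q)$-entry is $\lambda_{s_p^{-1}}\Phi(Z_p^*Z_q)\lambda_{s_q}$; finally, applying the (completely positive) conditional expectation $\mathcal E^{(n)}$ yields the desired positive matrix, in view of the formula displayed above.

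The argument is essentially a bookkeeping exercise once the two ingredients are in place: the factorisation $\lambda_{s_p}\pi(c_p^*c_q)\lambda_{s_q^{-1}} = Z_p^*Z_q$, and the observation that the $\lambda_{s^{-1}}\,(\cdot)\,\lambda_t$ sandwich outside $\Phi$ realises conjugation by $\Lambda$. The only real care needed is in checking that the covariance manipulations and the application of~\eqref{eq_el} line up the group elements correctly in the identity for $\mathcal N(h_\Phi)$; once that formula is secured, complete positivity of $\Phi$ and $\mathcal E$ do the rest.
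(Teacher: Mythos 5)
Your proof is correct. Both you and the paper hinge on the same key identity, $\cl N(h_\Phi)(s,t)(a)=\cl E\bigl(\lambda_{s^{-1}}\Phi(\lambda_s\pii{a}\lambda_{t^{-1}})\lambda_t\bigr)$, obtained from the covariance relation and \eqref{eq_el}, but you finish differently. The paper first applies Stinespring to write $\Phi=V^*\rho(\cdot)V$ with $\rho$ a representation of $A\rtimes_{\alpha,r}G$, sets $V(s)=\rho(\lambda_s)^*V\lambda_s$, and reads positive type off the resulting dilation-type formula $\cl N(h_\Phi)(s,t)(a)=\cl E\bigl(V(s)^*\rho(\pii{a})V(t)\bigr)$ (positivity of the inner matrix plus complete positivity of $\cl E$). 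You avoid the dilation entirely: you decompose positive matrices as sums of $(c_p^*c_q)_{p,q}$, factor $\lambda_{s_p}\pii{c_p^*c_q}\lambda_{s_q^{-1}}=Z_p^*Z_q$ with $Z_p=\pii{c_p}\lambda_{s_p^{-1}}$, and push positivity through $\Phi^{(n)}$, conjugation by $\Lambda=\mathrm{diag}(\lambda_{s_1^{-1}},\dots,\lambda_{s_n^{-1}})$, and $\cl E^{(n)}$ --- which is exactly the pattern used in the paper's proof of (i)$\Rightarrow$(ii) of Theorem~\ref{co:equivalencesofpositiveHSmultiplier}, now run with $\Phi$ in place of $S_F$. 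Your route is the more elementary and self-contained of the two (no dilation needed, complete positivity of $\Phi$ and $\cl E$ suffice), while the paper's Stinespring factorisation produces an explicit representation of $\cl N(h_\Phi)$ in the spirit of Theorem~\ref{re:Sphicpintermsofphi}(iii), at the cost of invoking the dilation theorem for the crossed product. Your preliminary remarks on boundedness of $s\mapsto h_\Phi(s)$ in $CB(A)$ (via the factorisation through $\pi$, multiplication by the unitaries $\lambda_s$, $\Phi$ and $\cl E$) and on membership of the relevant entries in $A$ correctly fill in what the paper leaves implicit, so the appeal to Theorem~\ref{co:equivalencesofpositiveHSmultiplier}(ii)$\Rightarrow$(i) is legitimate.
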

\begin{proof}
By Stinespring's Theorem,
there exist a Hilbert space $K$, an operator $V : \cl H \to K$ and a $*$-representation
$\rho : A\rtimes_{\alpha,r} G \to \cl B(K)$ such that
\[
    \Phi(x)=V^*\rho(x)V, \quad x\in A\rtimes_{\alpha,r} G.
\]
Using (\ref{eq_el}), we have
\[ \begin{split}
\cl N(h_{\Phi})(s,t)
&=
    \alpha_{s^{-1}}(h_{\Phi}(st^{-1})(\alpha_s(a))) = \alpha_{s^{-1}}(\cl E(\Phi(\pii{\alpha_s(a)}\lambda_{st^{-1}})\lambda_{st^{-1}}^*)) \\
        &= \alpha_{s^{-1}}(\cl E(\Phi(\lambda_s \pii{a}\lambda_{s^{-1}}\lambda_{st^{-1}})\lambda_{st^{-1}}^*)) \\
        &= \cl E(\lambda_{s^{-1}} \Phi(\lambda_s \pii{a}\lambda_{t^{-1}})\lambda_t\lambda_{s^{-1}})\lambda_s)) \\
        &= \cl E(\lambda_{s^{-1}} \Phi(\lambda_s \pii{a} \lambda_{t^{-1}})\lambda_t)) \\
        &= \cl E(\lambda_s^* V^* \rho(\lambda_s)\rho(\pii{a})\rho(\lambda_t)^* V \lambda_t)) .
\end{split} \]
Letting $V(s)=\rho(\lambda_s)^*V\lambda_s$, $s\in G$, we obtain
\[
    \cl N(h_{\Phi})(s,t) = \cl E(V(s)^*\rho(\pii{a})V(t))), \quad a\in A, s,t\in G.
\]
It follows that $h_{\Phi}$ is a function of positive type.
\end{proof}

We will denote by $L^2(\tau)$ (resp. $L^2(\tau')$) the Hilbert space $L^2(A,\tau)$
(resp. $L^2(A\rtimes_{\alpha,r} G,\tau')$).
For $t\in G$, let $L^2_t(\tau')$ be the closure, in the norm $\|\cdot\|_{2,\tau'}$,
of the subspace
\begin{equation}\label{eq_lt}
    \cl L_t := \{ \pii{a}\lambda_t : a\in A \}
\end{equation}
of $L^2(\tau').$

\begin{lemma}\label{l_ds}
The following statements hold true.
\begin{enumerate}[(i)]
    \item We have an orthogonal decomposition
        \begin{equation}\label{eq_dec}
            L^2(\tau') = \oplus_{t\in G} L^2_t(\tau').
        \end{equation}
    \item For each $t \in G$, the map $a\to \pii{a}\lambda_t$ extends to an isometry $V_t$ from $L^2(\tau)$ to $L^2(\tau')$,
such that $V_t(L^2(\tau)) = L^2_t(\tau')$.
    \item Let  $P_t$ the orthogonal projection of $L^2(\tau')$ onto $L^2_t(\tau')$.
Then the map $V_t^* P_t : L^2(\tau') \to L^2(\tau)$
satisfies
\[
    V_t^* P_t (z) = \cl E (z \lambda_t^*), \quad  z \in A\rtimes_{\alpha,r} G.
\]
\end{enumerate}
\end{lemma}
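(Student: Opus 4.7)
The plan is to verify everything on the dense set $\mathcal{L} := \mathrm{span}\{\pii{a}\lambda_t : a \in A, t \in G\}$ of $A\rtimes_{\alpha,r} G$, then extend by continuity. The key tools are the covariance relation $\lambda_t^*\pii{c}=\pii{\alpha_{t^{-1}}(c)}\lambda_t^*$, the fact that $\tau' = \tau\circ \cl E$, the $\tau$-invariance of $\alpha$, and the identity $\cl E(\pii{c}\lambda_u) = \delta_{u,e}\, c$.

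\textbf{Step 1: Proof of (ii).} For $a \in A$ and $t\in G$, I compute directly
\[
\|\pii{a}\lambda_t\|_{2,\tau'}^2 = \tau'(\lambda_t^*\pii{a^*a}\lambda_t) = \tau(\cl E(\pii{\alpha_{t^{-1}}(a^*a)})) = \tau(\alpha_{t^{-1}}(a^*a)) = \|a\|_{2,\tau}^2,
\]
where the penultimate equality uses $\cl E(\pii{c}) = c$ and the last uses $\tau\circ\alpha_{t^{-1}}=\tau$. Since $A$ is dense in $L^2(\tau)$, the map $a\mapsto \pii{a}\lambda_t$ extends to an isometry $V_t: L^2(\tau)\to L^2(\tau')$ whose range is, by definition, $L^2_t(\tau')$.

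\textbf{Step 2: Proof of (i).} For $s\neq t$ and $a,b\in A$, a similar computation gives
\[
\langle \pii{a}\lambda_s, \pii{b}\lambda_t\rangle_{2,\tau'} = \tau'(\lambda_t^*\pii{b^*a}\lambda_s) = \tau(\cl E(\pii{\alpha_{t^{-1}}(b^*a)}\lambda_{t^{-1}s})) = 0,
\]
since $t^{-1}s \neq e$. This shows the subspaces $L^2_t(\tau')$, $t\in G$, are pairwise orthogonal. For density of $\bigoplus_{t\in G}L^2_t(\tau')$ in $L^2(\tau')$, observe that $\mathcal{L}$ contains $\tilde\pi(\ell^1(G,A))$ restricted to finitely supported functions, hence $\mathcal{L}$ is norm-dense in $A\rtimes_{\alpha,r} G$ and therefore $\|\cdot\|_{2,\tau'}$-dense in $L^2(\tau')$.

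\textbf{Step 3: Proof of (iii).} By part (i), $P_t(\pii{a}\lambda_s) = \delta_{s,t}\,\pii{a}\lambda_t$, and since $V_t$ is an isometry with range $L^2_t(\tau')$, we have $V_t^*(\pii{a}\lambda_t) = a$. Hence for $z = \pii{a}\lambda_s \in \mathcal{L}$,
\[
V_t^*P_t(z) = \delta_{s,t}\,a = \cl E(\pii{a}\lambda_{st^{-1}}) = \cl E(z\lambda_t^*),
\]
so the desired identity holds on $\mathcal{L}$ and extends by linearity to its span. To extend to all of $A\rtimes_{\alpha,r} G$, I need both sides to be $\|\cdot\|_{2,\tau'}$-continuous into $L^2(\tau)$. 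The left-hand side is a composition of bounded operators. For the right-hand side, traciality of $\tau'$ gives $\|z\lambda_t^*\|_{2,\tau'} = \|z\|_{2,\tau'}$, and the conditional expectation $\cl E$ is a contraction with respect to the 2-norms because $\tau\circ\cl E = \tau'$; combining these yields $\|\cl E(z\lambda_t^*)\|_{2,\tau}\leq \|z\|_{2,\tau'}$.

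The only mildly delicate point is the last continuity argument in Step 3, but since conditional expectations are automatically $L^2$-contractive in the tracial setting, this is routine rather than a genuine obstacle; the remainder is a matter of bookkeeping with the covariance relation.
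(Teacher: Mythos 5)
Your proof is correct and follows essentially the same route as the paper: compute the inner products of the generating vectors $\pii{a}\lambda_s$ to get orthogonality and the isometry property, use totality of $\cup_t\cl L_t$ for the decomposition, and verify (iii) on the generators before extending by continuity. The only differences are cosmetic (you use the covariance relation and $\tau$-invariance where the paper invokes the trace property of $\tau'$, and you spell out the $L^2$-contractivity of $\cl E$, which the paper dismisses as "continuity of the involved maps").
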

\begin{proof}
(i) and (ii).
If $a, b \in A$ and $s, t \in G$ then
\[
    \langle \pii{a}\lambda_s, \pii{b}\lambda_t\rangle = \tau'(\lambda_t^*\pii{b^*a}\lambda_s) = \tau'(\pii{b^*a}\lambda_{st^{-1}})
= \tau(\cl E(\pii{b^*a}\lambda_{st^{-1}})).
\]
It follows that $\cl L_s\perp \cl L_t$ whenever $s\neq t$, and that
the map $a\to \pii{a}\lambda_t$ extends to a unitary from $L^2(\tau)$ onto $L^2_t(\tau')$.
The claim follows from the fact that $\cup_{t\in G} \cl L_t$ is total in $L^2(\tau')$.

(iii) follows after a simple check for $z = \pii{a} \lambda_s$, combined with the continuity of the involved maps.
\end{proof}

In the next lemma, we use Lemma \ref{l_ds} to identify $L^2(\tau')$ with $\oplus_{t\in G} L^2(\tau)$.

\begin{lemma}\label{l_dop}
Let $F : G\to CB(A)$ be a completely
positive Herz-Schur $(A,G,\alpha)$-multiplier such that $\tau\circ F(e)\leq \tau$ and $F(e)(\unA) = \unA$.
Then $S_F$ is $L^2$-bounded and $T_{S_F} = \oplus_{t\in G} T_{F(t)}$. In particular, $T_{S_F}$ is a contraction.
\end{lemma}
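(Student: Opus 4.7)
The plan is to combine the orthogonal decomposition $L^2(\tau') = \oplus_{t \in G} L^2_t(\tau')$ from Lemma \ref{l_ds} with the defining formula $S_F(\pii{a}\lambda_t) = \pii{F(t)(a)}\lambda_t$ to realise $T_{S_F}$ as the block-diagonal operator whose blocks are exactly the $T_{F(t)}$ furnished by Lemma \ref{l_autb}.

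First I would observe that $S_F$ preserves each of the subspaces $\cl L_t$ from (\ref{eq_lt}): indeed, as recorded in the proof of Theorem \ref{co:equivalencesofpositiveHSmultiplier}, $S_F(\pii{a}\lambda_t) = \pii{F(t)(a)}\lambda_t$ for all $a \in A$ and $t \in G$. Under the isometry $V_t : L^2(\tau) \to L^2_t(\tau')$ of Lemma \ref{l_ds}(ii), the restriction of $S_F$ to $\cl L_t$ corresponds exactly to the map $F(t) : A \to A$. Since the hypotheses of Lemma \ref{l_autb} are in force, each $F(t)$ extends to a contraction $T_{F(t)}$ on $L^2(\tau)$. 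Hence, using the identification $L^2(\tau') \cong \oplus_{t \in G} L^2(\tau)$ provided by Lemma \ref{l_ds}(i)--(ii), the operator $T := \oplus_{t \in G} T_{F(t)}$ is a well-defined contraction on $L^2(\tau')$.

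Next I would verify that $T$ coincides with $S_F$ on a dense subspace. For any finite sum $x = \sum_{t \in F_0} \pii{a_t}\lambda_t$ with $F_0 \subseteq G$ finite, direct computation gives $T(x) = \sum_{t \in F_0} \pii{F(t)(a_t)}\lambda_t = S_F(x)$. Such finite sums are operator-norm-dense in $A \rtimes_{\alpha,r} G$, and, because $\|\cdot\|_{2,\tau'} \leq \|\cdot\|$, also $\|\cdot\|_{2,\tau'}$-dense in $L^2(\tau')$. Combining this density with the operator-norm continuity of $S_F$ on $A \rtimes_{\alpha,r} G$ yields the estimate $\|S_F(x)\|_{2,\tau'} \leq \|x\|_{2,\tau'}$ for every $x \in A \rtimes_{\alpha,r} G$; this both establishes the $L^2$-boundedness of $S_F$ and forces $T_{S_F}$, the unique $L^2$-continuous extension of $S_F$, to coincide with the contraction $T$.

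I do not anticipate any substantial obstacle; the argument is essentially bookkeeping. The only mildly delicate point is navigating between the two relevant topologies, namely the operator norm, in which $S_F$ is a priori continuous on $A \rtimes_{\alpha,r} G$, and the $L^2$-norm, in which we wish to extend it. This is handled by the elementary inequality $\|\cdot\|_{2,\tau'} \leq \|\cdot\|$ together with a routine approximation argument.
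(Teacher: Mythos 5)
Your proposal is correct and follows essentially the same route as the paper: invariance of each $\cl L_t$ under $S_F$, identification of the restriction with $F(t)$ via the isometries $V_t$ of Lemma \ref{l_ds}, contractivity of each $T_{F(t)}$ from Lemma \ref{l_autb}, and agreement with the block-diagonal contraction on a dense subspace. Your explicit handling of the passage between the operator norm and the $\|\cdot\|_{2,\tau'}$-norm simply spells out what the paper's closing density remark leaves implicit.
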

\begin{proof}
Clearly, $S_F$ leaves the space $\cl L_t$, defined in (\ref{eq_lt}), invariant.
After identifying $L_t^2(\tau')$ with $L^2(\tau)$ by Lemma~\ref{l_ds},
we have that the restriction of $S_F$ to $\cl L_t$ coincides with $F(t)$.
By Lemma~\ref{l_autb}, $\oplus_{t\in G} T_{F(t)}$ is a well-defined contraction.
Since $T_F$ coincides with the latter operator on a dense subspace, 
we conclude that $S_F$ is $L^2$-bounded and
$T_{S_F} = \oplus_{t\in G} T_{F(t)}$.
\end{proof}

\begin{lemma}\label{l_l2conv}
Let $(F_i)_{i\in I}$ be a net of completely
positive Herz-Schur $(A,G,\alpha)$ -multipliers such that
$F_i(e)$ is unital and $\tau\circ F_i\leq \tau$ for each $i$.
The following are equivalent:
\begin{enumerate}[(i)]
    \item $\|S_{F_i}(x) - x\|_{2,\tau'} \to 0$, $x\in A\rtimes_{\alpha,r} G$;
    \item $\|F_i(t)(a) - a \|_{2,\tau} \to 0$ for all $t \in G$ and all $a \in A$.
\end{enumerate}
\end{lemma}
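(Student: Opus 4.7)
The plan is to prove (i) $\Leftrightarrow$ (ii) by extracting the implication (i) $\Rightarrow$ (ii) via testing on elementary tensors, and getting (ii) $\Rightarrow$ (i) by a standard density/$\epsilon/3$ argument using the uniform contractivity of $T_{S_{F_i}}$ supplied by Lemma \ref{l_dop}.

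For (i) $\Rightarrow$ (ii): fix $t\in G$ and $a\in A$, and set $x = \pii{a}\lambda_t\in A\rtimes_{\alpha,r} G$. By the proof of Theorem \ref{co:equivalencesofpositiveHSmultiplier}, $S_{F_i}(x) = \pii{F_i(t)(a)}\lambda_t$. Lemma \ref{l_ds}(ii) gives $\|\pii{b}\lambda_t\|_{2,\tau'} = \|b\|_{2,\tau}$ for every $b\in A$, so
\[
\|F_i(t)(a)-a\|_{2,\tau} = \|S_{F_i}(x) - x\|_{2,\tau'} \longrightarrow 0.
\]

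For (ii) $\Rightarrow$ (i): let $x\in A\rtimes_{\alpha,r} G$ and $\varepsilon>0$. Since the linear span of $\{\pii{a}\lambda_t : a\in A, t\in G\}$ is operator-norm dense in $A\rtimes_{\alpha,r} G$, there exist a finite set $F\subseteq G$ and elements $a_t\in A$ ($t\in F$) with $y := \sum_{t\in F} \pii{a_t}\lambda_t$ satisfying $\|x-y\|<\varepsilon$. Because $\tau'$ is a state we have $\|\cdot\|_{2,\tau'}\leq\|\cdot\|$, so $\|x-y\|_{2,\tau'}<\varepsilon$. By Lemma \ref{l_dop}, each $T_{S_{F_i}}$ is a contraction on $L^2(\tau')$, hence
\[
\|S_{F_i}(x)-x\|_{2,\tau'} \leq \|S_{F_i}(y)-y\|_{2,\tau'} + 2\|x-y\|_{2,\tau'} < \|S_{F_i}(y)-y\|_{2,\tau'} + 2\varepsilon.
\]
Using $S_{F_i}(\pii{a_t}\lambda_t) = \pii{F_i(t)(a_t)}\lambda_t$ together with the orthogonal decomposition \eqref{eq_dec} and the isometry statement of Lemma \ref{l_ds}(ii),
\[
\|S_{F_i}(y)-y\|_{2,\tau'}^2 = \sum_{t\in F} \|F_i(t)(a_t)-a_t\|_{2,\tau}^2.
\]
Since $F$ is finite, assumption (ii) shows this quantity tends to $0$, so $\limsup_i \|S_{F_i}(x)-x\|_{2,\tau'} \leq 2\varepsilon$, and the arbitrariness of $\varepsilon$ concludes the argument.

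The only subtlety is the uniform bound on the $T_{S_{F_i}}$ needed to pass from a dense subspace to all of $A\rtimes_{\alpha,r} G$; this is exactly what Lemma \ref{l_dop} provides (via Lemma \ref{l_autb}), and is in turn the reason we imposed the tracial and unitality conditions on $F_i(e)$ in the hypothesis. Once this is in place, the argument is routine.
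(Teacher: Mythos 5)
Your proof is correct and follows essentially the same route as the paper: the key identity $\|\pii{F_i(t)(a)}\lambda_t - \pii{a}\lambda_t\|_{2,\tau'} = \|F_i(t)(a) - a\|_{2,\tau}$ from Lemma \ref{l_ds}(ii) gives (i)$\Rightarrow$(ii), and (ii)$\Rightarrow$(i) uses the uniform contractivity $\|T_{S_{F_i}}\|\leq 1$ from Lemma \ref{l_dop} together with a density argument. The paper merely leaves these routine details implicit, which you have correctly filled in.
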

\begin{proof}
For $s\in G$ and $a\in A$ we have, by Lemma \ref{l_ds} (ii),
\[
    \|\pii{F_i(s)(a)}\lambda_s - \pii{a}\lambda_s\|_{2,\tau'} = \|F_i(s)(a) - a\|_{2,\tau}, \quad i\in I.
\]
The equivalence now follows easily from Lemma~\ref{l_dop} and the fact that,
by Lemma~\ref{l_dop}, $\|T_{S_{F_i}}\|\leq 1$ for all $i\in I$.
\end{proof}

Now we can characterise the Haagerup property for $A \rtimes_{\alpha ,r} G$.

\begin{theorem}\label{th:Haagerupproperty}
Let $(A,G,\alpha)$ be a $C^*$-dynamical system,
where the group $G$ is discrete, $A$ has a faithful tracial state $\tau$ and $\alpha$ is $\tau$-preserving.
The following are equivalent:
\begin{enumerate}[(i)]
    \item $(A,G,\alpha, \tau)$ has the Haagerup property;
    \item $(A \rtimes_{\alpha ,r} G, \tau')$ has the $C^*$-algebra Haagerup property.
\end{enumerate}
\end{theorem}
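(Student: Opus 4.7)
\medskip
\noindent\textbf{Plan of proof.} The natural strategy is to go back and forth between nets of completely positive Herz--Schur multipliers $F_i$ on $(A,G,\alpha)$ and nets of unital completely positive maps $\Phi_i$ on $A\rtimes_{\alpha,r} G$ via the correspondences $\Phi_i = S_{F_i}$ and $F_i = h_{\Phi_i}$, where $h_\Phi$ is defined in \eqref{eq_hPhi}. The decomposition $L^2(\tau') = \oplus_{t\in G} L^2_t(\tau')$ from Lemma~\ref{l_ds} and the identity $T_{S_F} = \oplus_{t\in G} T_{F(t)}$ from Lemma~\ref{l_dop} are the main technical tools that translate between the two pictures.

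\smallskip
\noindent For (i)$\Rightarrow$(ii), given a net $(F_i)$ satisfying Definition~\ref{de:Haageruppropertysystem}, I set $\Phi_i := S_{F_i}$ on $A\rtimes_{\alpha,r} G$. Complete positivity of $\Phi_i$ is Definition~\ref{de:positiveHSmult}. Since $\Phi_i(\pii{a}\lambda_s) = \pii{F_i(s)(a)}\lambda_s$ (by the proof of Theorem~\ref{co:equivalencesofpositiveHSmultiplier}(iii)$\Rightarrow$(i)) and $F_i(e)(\unA)=\unA$, one has $\Phi_i(\unA)=\unA$; the trace condition $\tau'\circ\Phi_i\leq\tau'$ follows from $\tau'(\Phi_i(x))=\tau(F_i(e)(\cl E(x)))\leq\tau(\cl E(x))=\tau'(x)$ for positive $x$. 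By Lemma~\ref{l_dop}, $T_{\Phi_i} = \oplus_{t\in G} T_{F_i(t)}$, and the standard fact that a direct sum $\oplus_t K_t$ of compact operators with $\|K_t\|\to 0$ at infinity is compact shows $T_{\Phi_i}$ is compact. The $L^2$-convergence condition is Lemma~\ref{l_l2conv}.

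\smallskip
\noindent For (ii)$\Rightarrow$(i), given the net $(\Phi_i)$ from Definition~\ref{d_hpcst} (applied to $(A\rtimes_{\alpha,r}G,\tau')$), I set $F_i := h_{\Phi_i}$; each $F_i$ is a completely positive Herz--Schur multiplier by Proposition~\ref{p_cphs}. The key observation is that, for $a\in A$,
\[
    h_{\Phi_i}(t)(a) = \cl E(\Phi_i(\pii{a}\lambda_t)\lambda_t^*) = V_t^* P_t\, T_{\Phi_i}\, V_t(a),
\]
so $T_{h_{\Phi_i}(t)} = V_t^* T_{\Phi_i} V_t$. Taking $a=\unA$ shows $F_i(e)(\unA)=\cl E(\Phi_i(\unA))=\unA$, and a computation with $\cl E$ yields $\tau\circ F_i(e)\leq \tau$. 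Compactness of each $T_{h_{\Phi_i}(t)}$ is immediate from compactness of $T_{\Phi_i}$. The vanishing of $t\mapsto \|T_{h_{\Phi_i}(t)}\|$ at infinity reduces to showing $\|T_{\Phi_i} V_t\|=\|T_{\Phi_i}|_{L^2_t(\tau')}\|\to 0$; this follows because the unit ball of $L^2_t(\tau')$ consists of mutually orthogonal unit vectors as $t$ varies (by Lemma~\ref{l_ds}(i)), hence tends weakly to $0$, and a compact operator maps weak-null nets to norm-null nets. The pointwise $L^2$-convergence $\|F_i(t)(a)-a\|_{2,\tau}\to 0$ follows from
\[
    \|h_{\Phi_i}(t)(a)-a\|_{2,\tau} = \|V_t^*(T_{\Phi_i} V_t(a) - V_t(a))\|_{2,\tau} \leq \|\Phi_i(\pii{a}\lambda_t) - \pii{a}\lambda_t\|_{2,\tau'},
\]
and the right hand side tends to zero by hypothesis.

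\smallskip
\noindent The main obstacle is the vanishing-at-infinity condition in the reverse direction: one has to convert the global compactness of $T_{\Phi_i}$ into a uniform decay of the pieces $\|T_{\Phi_i} V_t\|$ along $G$. The orthogonality of the subspaces $L^2_t(\tau')$ supplied by Lemma~\ref{l_ds} is exactly what makes the weak-to-norm argument for compact operators applicable here.
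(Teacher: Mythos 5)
Your proposal is correct and follows essentially the same route as the paper: the forward direction via $\Phi_i = S_{F_i}$ together with Lemmas~\ref{l_dop} and \ref{l_l2conv}, and the converse via $F_i = h_{\Phi_i}$, Proposition~\ref{p_cphs}, and the compression identity $T_{F_i(t)} = V_t^* P_t T_{\Phi_i} P_t V_t$, with your orthonormal weak-null argument for the vanishing of $t\mapsto\|T_{F_i(t)}\|$ being just a rephrasing of the paper's use of $P_t\to 0$ strongly against the compact $T_{\Phi_i}$. The only notable (and welcome) difference is that you verify explicitly the trace inequality $\tau'\circ S_{F_i}\leq\tau'$ in the forward direction, a point the paper leaves implicit.
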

\begin{proof}
(i)$\Rightarrow$(ii) Let $(F_i)_{i\in I}$ be a net of completely positive Herz-Schur
$(A,G,$ $\alpha)$-multipliers satisfying conditions (i)--(iv) of Definition~\ref{de:Haageruppropertysystem}.
Since $F_i(e)(\unA)$ $= \unA$, the map $S_{F_i}$ is unital.
By definition, $S_{F_i}$ is completely positive.
By Lemma~\ref{l_dop}, $T_{S_{F_i}}$ is a compact contraction for all $i\in I$.
By Lemma~\ref{l_l2conv}, $(T_{S_{F_i}})_{i\in I}$ converges to the identity operator in the strong operator topology.

(ii)$\Rightarrow$(i) Let $(\Phi_i)_{i\in I}$ be a net associated with
the $C^*$-algebra Haagerup property of $A \rtimes_{\alpha ,r} G$ as in Definition~\ref{d_hpcst}.
Write $F_i = h_{\Phi_i}$, $i\in I$. By Proposition~\ref{p_cphs}, $F_i$ is a completely positive Herz-Schur $(A,G,\alpha)$-multiplier.
Since $\Phi_i$ is unital, $F_i(e)(\unA) = \unA$.
Moreover, if $a\in A^+$ then
\[
    (\tau \circ F_i(e))(a) = \tau(\mathcal{E}(\Phi_i(\pii{a}))) = \tau' (\Phi_i(\pii{a})) \leq \tau'( \pii{a}) = \tau(a) ;
\]
that is, $\tau\circ F_i(e) \leq \tau$, $i\in I$.

Fix $t \in G$ and $a\in A$.
By the Cauchy-Schwarz inequality (see {\it e.g.} \cite[Proposition 3.3]{paulsen}),
\[ \begin{split}
    \| F_i(t)(a) \|_{2,\tau}^2
    &= \tau \Big( \mathcal{E} (\Phi_i(\pii{a}\lambda_t) \lambda_t^* )^* \big( \mathcal{E} (\Phi_i(\pii{a}\lambda_t) \lambda_t^* ) \big) \Big) \\
        &\leq \tau \Big( \mathcal{E} \big( (\Phi_i(\pii{a}\lambda_t) \lambda_t^* )^* (\Phi_i(\pii{a}\lambda_t) \lambda_t^* ) \Big) \\
        & = \tau' \Big( (\Phi_i(\pii{a}\lambda_t) \lambda_t^* )^* (\Phi_i(\pii{a}\lambda_t) \lambda_t^* ) \Big)
        = \| \Phi_i (\pii{a} \lambda_t) \|_{2,\tau'}^2 \\
        & = \| T_{\Phi_i} (\pii{a} \lambda_t) \|^2_{2,\tau'}.
\end{split} \]
Since $T_{\Phi_i}$ is compact,
\[
    \{ \Phi_i( \pii{a}\lambda_t ) : \|\pi(a)\|_{2,\tau'}\leq 1\}
\]
is a relatively compact set.
Since $\|\pii{a}\|_{2,\tau'} = \|a\|_{2,\tau}$, $a\in A$,
it follows that the subset $\{F_i(t)(a) : \|a\|_{2,\tau}\leq 1\}$ of $L^2(\tau')$
is relatively compact, and hence $F_i(t)$ is $L^2$-compact, $i\in I$.

Letting $P_t : L^2(\tau')\to L^2_t(\tau')$ be the orthogonal projection, we have that
\begin{equation}\label{eq_compre}
    T_{F_i(t)} = V_t^* P_t T_{\Phi_i} P_t V_t;
\end{equation}
indeed, for $a \in A$, by Lemma \ref{l_ds} (iii), we have
\[ \begin{split}
    T_{F_i(t)} (a) &= F_i(t)(a) = h_{\Phi_i}(t) (a)= \cl E (\Phi_i(\pii{a} \lambda_t) \lambda_t^*) \\
    &= V_t^* P_t \Phi_i(\pii{a} \lambda_t) = V_t^* P_t T_{\Phi_i}(\pii{a} \lambda_t) =
V_t^* P_t T_{\Phi_i} P_t V_t (a).
\end{split} \]
Since $V_t$ is an isometry, $T_{\Phi_i}$ is a compact operator, and $P_s\to_{s\to \infty} 0$ in the strong operator topology,
we have that $\|T_{F_i(t)}\|\to_{t\to \infty} 0$.
On the other hand, identity (\ref{eq_compre})
and the uniform boundedness of  the net $(T_{F_i(t)})_{i\in I}$
show that
$T_{F_i(t)}\to_{i} \id$ in the strong operator topology, and the proof is complete.
\end{proof}

The following result is well-known
(see for example \cite{Meng}); we next show how one can deduce it quickly from our Theorem~\ref{th:Haagerupproperty}.

\begin{corollary}\label{c_haageo}
Let $(A,G,\alpha)$ be a $C^*$-dynamical system,
where the group $G$ is discrete, $A$ has a faithful tracial state $\tau$ and $\alpha$ is $\tau$-preserving.
Assume that  $(A \rtimes_{\alpha ,r} G, \tau')$ has the $C^*$-algebra Haagerup property.
Then $G$ has the Haagerup property and $(A, \tau)$ has the $C^*$-algebra Haagerup property.
\end{corollary}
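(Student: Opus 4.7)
The plan is to invoke Theorem~\ref{th:Haagerupproperty} to convert the $C^*$-algebra Haagerup property of $(A\rtimes_{\alpha,r} G,\tau')$ into a net $(F_i)_{i\in I}$ of completely positive Herz-Schur $(A,G,\alpha)$-multipliers satisfying conditions (i)--(iv) of Definition~\ref{de:Haageruppropertysystem}, and then to read off the two required approximating nets from it by evaluating $F_i$ at the identity of $G$ and by applying $\tau$ to $F_i(\cdot)(1_A)$.

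For $(A,\tau)$, I will take $\Phi_i=F_i(e)$. Unitality and the contractivity $\tau\circ\Phi_i\leq\tau$ come directly from condition (i); $L^2$-compactness and the pointwise $L^2$-convergence $\|\Phi_i(a)-a\|_{2,\tau}\to 0$ are conditions (ii) and (iv) specialised to $t=e$. For complete positivity I observe that $F_i(e)=\mathcal{N}(F_i)(e,e)$, so Theorem~\ref{co:equivalencesofpositiveHSmultiplier} combined with the Stinespring-type representation in Theorem~\ref{re:Sphicpintermsofphi} yields $\rho$ and $V(e)$ with $F_i(e)(a)=V(e)^*\rho(a)V(e)$, which is manifestly completely positive. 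Hence $(\Phi_i)_{i\in I}$ witnesses the $C^*$-algebra Haagerup property of $(A,\tau)$ via Definition~\ref{d_hpcst}.

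For $G$, I will define $\varphi_i:G\to\mathbb{C}$ by $\varphi_i(t)=\tau(F_i(t)(1_A))$. The value $\varphi_i(e)=\tau(1_A)=1$ is immediate; the pointwise convergence $\varphi_i(t)\to 1$ follows from condition (iv) together with the Cauchy-Schwarz bound $|\tau(b)|\leq\|b\|_{2,\tau}$; and $\varphi_i\in c_0(G)$, because
$$|\varphi_i(t)|\leq\|F_i(t)(1_A)\|_{2,\tau}\leq\|T_{F_i(t)}\|,$$
which vanishes at infinity by condition (iii). To establish positive definiteness I fix $s_1,\ldots,s_n\in G$, note that the matrix in $M_n(A)$ with every entry equal to $1_A$ is positive, and use the fact that $\mathcal{N}(F_i)$ is of positive type (Theorem~\ref{co:equivalencesofpositiveHSmultiplier}) to conclude that
$$\big(\mathcal{N}(F_i)(s_j,s_k)(1_A)\big)_{j,k}=\big(\alpha_{s_j^{-1}}(F_i(s_j s_k^{-1})(1_A))\big)_{j,k}\in M_n(A)^+.$$
Applying $\tau$ entrywise (a positive map $M_n(A)\to M_n(\mathbb{C})$) and invoking the $\alpha$-invariance of $\tau$ yields $(\varphi_i(s_j s_k^{-1}))_{j,k}\in M_n(\mathbb{C})^+$, which is positive definiteness in the standard sense after the substitution $r_j=s_j^{-1}$.

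The only step that is not a direct specialisation of the hypotheses is the positive-definiteness verification for $\varphi_i$; its crux is to feed the rank-one positive matrix with all entries $1_A$ into $\mathcal{N}(F_i)$ and then to exploit $\tau\circ\alpha_s=\tau$ to clear the outer automorphisms before reading off a positive scalar matrix.
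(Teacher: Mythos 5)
Your proposal is correct, and its skeleton coincides with the paper's: invoke Theorem~\ref{th:Haagerupproperty} to get a net $(F_i)_{i\in I}$ as in Definition~\ref{de:Haageruppropertysystem}, take $F_i(e)$ for the $C^*$-algebra Haagerup property of $(A,\tau)$, and take $\varphi_i(s)=\tau(F_i(s)(\unA))$ for the Haagerup property of $G$, with the normalisation, $c_0$ and convergence checks done exactly as in the paper. The one place where you genuinely diverge is the positive-definiteness of $\varphi_i$: the paper verifies it at the crossed-product level, writing $\sum_{k,l}\overline{\mu_k}\varphi_i(s_k^{-1}s_l)\mu_l$ as $\tau'$ applied to $\bigl[\overline{\mu_1}\lambda_{s_1}\cdots\overline{\mu_n}\lambda_{s_n}\bigr] S_{F_i}^{(n)}\bigl([\lambda_{s_k^{-1}s_l}]\bigr)\bigl[\cdots\bigr]^*$, using the trace property of $\tau'$ and the complete positivity of $S_{F_i}^{(n)}$; you instead stay inside $A$, feeding the all-$\unA$ positive matrix into $\cl N(F_i)$ (which is of positive type by Theorem~\ref{co:equivalencesofpositiveHSmultiplier}), and then applying $\tau$ entrywise and using $\tau\circ\alpha_s=\tau$ to remove the outer automorphisms. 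Both arguments are valid; yours is slightly more elementary (no manipulation of the $\lambda_s$'s or of $\tau'$, and it is in effect the observation, already implicit in Section~2 of the paper, that $s\mapsto F_i(s)(\unA)$ is $\alpha$-positive definite, which composed with the invariant trace gives a scalar positive definite function), while the paper's computation displays more directly how complete positivity of $S_{F_i}$ on $A\rtimes_{\alpha,r}G$ is used. You are also somewhat more careful than the paper on the $A$-side, noting explicitly that $F_i(e)=\cl N(F_i)(e,e)$ is completely positive via the Stinespring-type representation, a point the paper dismisses as a direct consequence of the definitions.
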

\begin{proof}
By Theorem \ref{th:Haagerupproperty},
there exists a net $(F_i)_{i\in I}$ of positive Herz-Schur $(A,G,\alpha)$-multipliers satisfying the conditions
of Definition~\ref{de:Haageruppropertysystem}.
Set
\begin{align*}
    T_i &= F_i(e), \quad i \in I, \\
    \varphi_i(s) &= \tau(F_i(s)(\unA)), \quad i \in I, s \in G.
\end{align*}
The fact that the net $(T_i)_{i\in I}$ yields the $C^*$-algebra Haagerup property for $A$
is a direct consequence of Definitions~\ref{d_hpcst} and \ref{de:Haageruppropertysystem}.
It thus suffices to check that  $(\varphi_i)_{i \in I}$ is a net of
normalised positive definite functions on $G$ vanishing at infinity and converging pointwise to $1$.
The fact that $\varphi_i$ is normalised is immediate from the unitality of $F_i(e)$, $i\in I$.
If $s \in G$ then
\[
    |\nph_i(s)| = |\ip{ F_i(s)(\unA)}{ \unA}_{2,\tau} | \leq \| \unA \|_{2,\tau} \| T_{F_i(s)}\|  \| \unA \|_{2,\tau} = \|T_{F_i(s)}\|,
\]
and property (iii) in Definition~\ref{de:Haageruppropertysystem} shows that $\varphi_i(s)\to 0$ as $s\to \infty$.
Further,
\[
    |\varphi_i(s) - 1| = |\ip{F_i(s)(\unA) - \unA}{\unA}_{2,\tau}| \leq  \| \unA \|_{2,\tau} \| F_i(s) (\unA) - \unA \|_{2, \tau}
\]
and so, by condition (iv) in Definition~\ref{de:Haageruppropertysystem},
$\varphi_i(s) \to 1$.
Finally, the positive definiteness of $\varphi_i$ can be checked by a standard matrix computation:
given $n \in \NN$, $s_1, \ldots, s_n \in G$ and $\mu_1, \ldots, \mu_n \in \mathbb{C}$, we have 
\[ \begin{split}
    \sum_{k,l=1}^n \overline{\mu_k} \varphi(s_k^{-1} s_l) \mu_l &= \tau' \left(\sum_{k,l=1}^n  \overline{\mu_k} S_{F_i} (\lambda_{s_k^{-1} s_l}) \mu_l \lambda_{s_l\inv s_k} \right) \\
    &=\tau' \left(\sum_{k,l=1}^n  \overline{\mu_k}  \lambda_{s_k}S_{F_i} (\lambda_{s_k^{-1} s_l}) \mu_l \lambda_{s_l^{-1}} \right),
\end{split} \]
where we used the trace property of $\tau'$.
Now note that
\[
    \sum_{k,l=1}^n  \overline{\mu_k}  \lambda_{s_k}S_{F_i} (\lambda_{s_k^{-1} s_l}) \mu_l \lambda_{s_l^{-1}}
\]
coincides with the operator 
\[
    \begin{bmatrix} \overline{\mu_1} \lambda_{s_1} & \cdots & \overline{\mu_n} \lambda_{s_n} \end{bmatrix}
S_{F_i}^{(n)}\left([\lambda_{s_k^{-1} s_l}]_{k,l=1}^n \right)
\begin{bmatrix} \overline{\mu_1} \lambda_{s_1} & \cdots & \overline{\mu_n} \lambda_{s_n} \end{bmatrix}^*,
\]
which is positive since $S_{F_i}^{(n)}$ is so.
\end{proof}

\begin{remarks}
{\rm {\bf (i)}
Note that the converse of Corollary \ref{c_haageo}, {\it i.e.} the statement that if $(A,\tau)$ has the $C^*$-algebra Haagerup property and $G$ has the Haagerup property then
$(A \rtimes_{\alpha ,r} G, \tau')$ has the $C^*$-algebra Haagerup property, which was claimed to hold in \cite{Meng}, is false: for example,
both ${\rm SL}(2, \Z)$ and $\Z^2$ have the Haagerup property but $\Z^2 \rtimes {\rm SL}(2,\Z)$ does not (see the references given in \cite[pg.\ 359, 374]{bo}).
The error originates in the earlier article \cite{You}: the maps $\Phi_n$ considered at the beginning of Section~2 of \cite{You} need not be completely positive.

\smallskip

\noindent {\bf (ii)} Dong and Ruan~\cite{dong_ruan} study a Hilbert module version of the Haagerup approximation property for crossed products and show that the relevant approximating maps --- which are required to be module maps --- can always be constructed
{\it via} multipliers associated to maps of the form $F(g)(a) = h(g)(a)$ ($g \in G, a \in A$), where $h:G \to \mathcal{Z}(A)$. It is clear that the
$A$-modularity condition on the maps $S_F$, together with the positivity requirement, force $F$ to be of the type described above, making the potential class of approximants much narrower compared to the one considered here.
On the other hand, as the approximation property of Dong and Ruan requires only the compactness of the maps $S_F$ with respect to $A$, and not as Hilbert space operators, the conditions on the multipliers they study do not involve any trace on $A$, which we need to impose in order to obtain the genuine $C^*$-algebra Haagerup property.}
\end{remarks}


\section{Nuclearity}
\label{s_n}

\noindent
In this section, we provide a characterisation of the nuclearity of $A\rtimes_{\alpha,r}G$ in terms of Herz-Schur $(A,G,\alpha)$-multipliers.
Recall that a $C^*$-algebra $B$  is called \emph{nuclear} if, for any other $C^*$-algebra $A$, the algebraic tensor product $A \odot B$ admits a unique $C^*$-norm.
It is well-known (and due to Choi, Effros, and Kirchberg, see Theorem~3.8.7 in \cite{bo}) that $B$ is nuclear if and only if there exist a net $(k_i)_{i\in I}$ of natural numbers
and completely positive contractions $\nph_i : B\to M_{k_i}(\bb{C})$
and $\psi_i : M_{k_i}(\bb{C}) \to B$ (which moreover can be assumed to be unital)
such that $\|(\psi_i\circ \nph_i)(x) - x\|\to 0$ for every $x\in B$.
Let $\cl F(B)$ denote the set of linear maps on $B$ of finite rank; note that \cite{ChoiEffros} shows that $B$ is nuclear if and only if there exists a net of completely positive contractions in $\cl F(B)$ approximating the identity map on $B$ pointwise in norm.

\begin{definition}\label{de:nuclearsystem}
A $C^*$-dynamical system $(A,G,\alpha)$, where $G$ is a discrete group and $A$ is a $C^*$-algebra,
will be called \emph{nuclear} if
there exists a net $(F_i)_{i\in I}$ of completely
positive, finitely supported, Herz-Schur $(A,G,\alpha)$-multipliers,
such that
\begin{enumerate}[i.]
    \item $\|F_i(e)\|_{\cb}\leq 1$ for all $i\in I$,
    \item $F_i(s)\in \cl F(A)$ for all $s\in G$ and all $i\in I$, and
    \item $\|F_i(s)(a)-a\|\to_{i\in I} 0$ for all $s\in G$ and all $a\in A$.
\end{enumerate}
\end{definition}

For the next lemma recall the definition of the maps $h_{\Phi}$ from (\ref{eq_hPhi}).

\begin{lemma}\label{le:equivalentconvergence}
(i) Let $(F_i)_{i\in I}$ be a net of completely positive Herz-Schur $(A,G,\alpha)$-multipliers
such that $\sup_i\|F_i(e)\| < \infty$.  Then $\|F_i(s)(a)-a\|\to_{i\in I} 0$ for all $s\in G$ and $a\in A$ if and only if
$\|S_{F_i}(x)-x\|\to_{i\in I} 0$  for all $x\in A\rtimes_{\alpha,r}G$.

(ii) If $(\Phi_i)_{i\in I}$ is a net of bounded linear maps on $A\rtimes_{\alpha,r} G$ such that
$\|\Phi_i(x)-x\| \to_{i\in I} 0$ for every $x\in A\rtimes_{\alpha,r} G$
then $\|h_{\Phi_i}(s)(a)-a\|\to_{i\in I} 0$ for every $a\in A$ and every $s\in G$.
\end{lemma}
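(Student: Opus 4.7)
The plan is to treat (i) by a density plus three-$\varepsilon$ argument, and (ii) by a direct computation using the conditional expectation $\cl E$.

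For the reverse implication in (i), I would test the convergence $\|S_{F_i}(x) - x\| \to 0$ on elements of the form $x = \pii{a}\lambda_s$. The step (iii)$\Rightarrow$(i) of the proof of Theorem~\ref{co:equivalencesofpositiveHSmultiplier} already gives $S_{F_i}(\pii{a}\lambda_s) = \pii{F_i(s)(a)}\lambda_s$. Since $\lambda_s$ is unitary in $A\rtimes_{\alpha,r}G$ and $\pi$ is isometric on $A$ (viewed as a subalgebra of the crossed product), one reads off $\|F_i(s)(a) - a\| = \|S_{F_i}(\pii{a}\lambda_s) - \pii{a}\lambda_s\| \to 0$ at once.

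For the forward direction in (i), I would first secure a uniform bound $\sup_{i} \|S_{F_i}\| < \infty$. By the moreover clause of Theorem~\ref{co:equivalencesofpositiveHSmultiplier} we have $\|S_{F_i}\|_{\cb} = \|F_i(e)\|_{\cb}$, and since each $F_i(e)$ is a completely positive map on a $C^*$-algebra its cb-norm coincides with its operator norm, so the hypothesis $\sup_i \|F_i(e)\| < \infty$ transfers to $S_{F_i}$. Next, on a finitely supported element $y = \sum_{s\in F} \pii{a_s}\lambda_s$, the definition of $S_{F_i}$ gives $S_{F_i}(y) - y = \sum_{s\in F} \pii{F_i(s)(a_s) - a_s}\lambda_s$, which tends to $0$ by assumption. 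Since such sums are norm-dense in $A\rtimes_{\alpha,r} G$ (by the very construction of the reduced crossed product), a routine three-$\varepsilon$ argument combining this density with the uniform bound on $\|S_{F_i}\|$ extends the convergence to arbitrary $x \in A\rtimes_{\alpha,r}G$.

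For part (ii), I would use that $\cl E(\pii{a}\lambda_s\lambda_s^*) = \cl E(\pii{a}) = a$ to rewrite
\[
    h_{\Phi_i}(s)(a) - a = \cl E\bigl((\Phi_i(\pii{a}\lambda_s) - \pii{a}\lambda_s)\lambda_s^*\bigr).
\]
Since $\cl E$ is a conditional expectation, and thus a contraction, and $\lambda_s$ is unitary, this yields
\[
    \|h_{\Phi_i}(s)(a) - a\| \leq \|\Phi_i(\pii{a}\lambda_s) - \pii{a}\lambda_s\| \to 0,
\]
directly from the pointwise hypothesis on $(\Phi_i)$. The only mildly delicate point in the whole lemma is the uniform bound in the forward direction of (i), which rests on the cb-norm identity from Theorem~\ref{co:equivalencesofpositiveHSmultiplier} together with the classical equality of norm and cb-norm for completely positive maps on a $C^*$-algebra; once this is in hand, everything else reduces to standard continuity and triangle-inequality manipulations.
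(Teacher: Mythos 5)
Your proposal is correct and follows essentially the same route as the paper: the forward direction of (i) is the same density-plus-uniform-bound argument on finitely supported sums $\sum_{s\in E}\pii{a_s}\lambda_s$ (the paper controls $\|S_{F_i}\|_{\cb}=\|F_i(e)(\unA)\|$ directly, while you invoke the norm identity of Theorem~\ref{co:equivalencesofpositiveHSmultiplier} together with the equality of norm and cb-norm for completely positive maps, which amounts to the same bound), the reverse direction tests on $\pii{a}\lambda_s$ exactly as in the paper, and part (ii) is the identical computation with the contractive conditional expectation $\cl E$.
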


\begin{proof}
(i) Assume that $\|F_i(s)(a)-a\|\to_{i\in I} 0$ for all $s\in G$ and $a\in A$.
Since
\[
    \|S_{F_i}\|_{\rm cb} = \|S_{F_i}(1)\| = \|F_i(e)(\unA)\|
\]
and $\sup_i\|F_i(e)\| < \infty$, it suffices to show that
$\|S_{F_i}(x) - x\| \to_{i\in I} 0$  whenever
$x = \sum_{s\in E} \pii{a_s} \lambda_s$ for a finite set $E\subseteq G$.
But, in this case,
\[
    \|S_{F_i}(x)-x\| = \left\| \sum_{s\in E} \pii{F_i(s)(a) - a} \lambda_s \right\| \leq \sum_{s\in E} \|F_i(s)(a) - a\| \to_{i\in I} 0.
\]

Conversely, suppose that $\|S_{F_i}(x)-x\| \to_{i\in I} 0$ for all $x\in A\rtimes_{\alpha,r} G$.
Then for fixed $s\in G$ and $a\in A$, we have
\[
    \|F_i(s)(a) - a\| = \|S_{F_i}(\pii{a} \lambda_s) - \pii{a} \lambda_s\|\to_{i\in I} 0.
\]

(ii) Assume that $\|\Phi_i(x)-x\| \to_i 0$ for all $x\in A\rtimes_{\alpha,r} G$. Then
\[ \begin{split}
    \|h_{\Phi_i}(s)(a) - a\| &= \|\cl E(\Phi_i(\pii{a} \lambda_s)\lambda_s^*) - \cl E((\pii{a}\lambda_s)\lambda_s^*)\| \\
    &\leq \|\Phi_i(\pii{a} \lambda_s) - \pii{a} \lambda_s\| \to_{i\in I} 0.
\end{split} \]
\end{proof}

We are now ready to state and prove the main result of this section.

\begin{theorem}\label{char_am}
Let $G$ be a discrete group and $(A,G,\alpha)$ be a $C^*$-dynamical system.
The following are equivalent:
\begin{enumerate}[(i)]
    \item $(A,G,\alpha)$ is nuclear;
    \item $A\rtimes_{\alpha,r} G$ is nuclear.
\end{enumerate}
\end{theorem}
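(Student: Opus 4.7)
The plan is to prove the two implications separately.

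For (i)$\Rightarrow$(ii), I would show that the associated net $(S_{F_i})_{i\in I}$ consists of completely positive, finite-rank contractions on $A\rtimes_{\alpha,r} G$ converging to the identity in the point-norm topology; by the Choi--Effros characterisation recalled at the beginning of this section, this yields nuclearity. Complete positivity is part of Definition~\ref{de:positiveHSmult}, and $\|S_{F_i}\|_{\cb} = \|F_i(e)\|_{\cb} \leq 1$ follows from Theorem~\ref{co:equivalencesofpositiveHSmultiplier}. Finite rank is obtained from the observation that on the dense $*$-subalgebra $\tilde{\pi}(\ell^1(G,A))$ one has
\[
    S_{F_i}\!\left(\sum_{s\in G} \pii{a_s}\lambda_s\right)=\sum_{s\in\supp(F_i)} \pii{F_i(s)(a_s)}\lambda_s,
\]
whose values lie in the finite-dimensional subspace spanned by $\pii{b}\lambda_s$ for $s\in\supp(F_i)$ and $b$ in the (finite-dimensional) range of $F_i(s)$; continuity of $S_{F_i}$ confines its range on all of $A\rtimes_{\alpha,r} G$ to this same subspace. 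Point-norm convergence to the identity is exactly Lemma~\ref{le:equivalentconvergence}(i).

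For (ii)$\Rightarrow$(i), I would extract from the nuclearity of $A\rtimes_{\alpha,r} G$ a net $\Phi_i := \psi_i\circ\phi_i$ with $\phi_i : A\rtimes_{\alpha,r} G \to M_{n_i}$ and $\psi_i : M_{n_i} \to A\rtimes_{\alpha,r} G$ completely positive contractions such that $\Phi_i \to \id$ pointwise in norm. The crucial step is to modify $\psi_i$ so that its image lies in the $*$-subalgebra $\tilde{\pi}(\ell^1(G,A))$ of ``polynomials''. The map $\psi_i$ is determined by the positive matrix $B_i := (\psi_i(e_{j,k}))_{j,k} \in M_{n_i}(A\rtimes_{\alpha,r} G)^+$; writing $B_i = C_i^* C_i$, approximating each entry of $C_i$ in norm by an element of $\tilde{\pi}(\ell^1(G,A))$ supported on a common finite set $E_i\subseteq G$, and forming the product $B_i' := (C_i')^* C_i'$, one obtains a positive matrix with polynomial entries arbitrarily close to $B_i$. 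I would define $\psi_i'(e_{j,k}) := (B_i')_{j,k}$, set $\Phi_i' := \psi_i'\circ\phi_i$, rescale by a scalar tending to $1$ to ensure $\|\Phi_i'\|_{\cb} \leq 1$, and use a standard diagonalisation (choosing the polynomial error small compared to $1/n_i$ when controlling the cb-norm of $\psi_i - \psi_i'$) so that $\Phi_i' \to \id$ pointwise in norm is preserved.

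Setting $F_i := h_{\Phi_i'}$ as in \eqref{eq_hPhi}, Proposition~\ref{p_cphs} shows each $F_i$ is a completely positive Herz--Schur $(A,G,\alpha)$-multiplier. Since $\Phi_i'(\tilde{\pi}(\ell^1(G,A))) \subseteq \operatorname{span}\{\pii{a}\lambda_s : a\in A,\ s\in E_i\}$, a short computation (projecting via $\lambda_s^*$ and $\cl E$) gives $\supp(F_i) \subseteq E_i$; moreover $F_i(s)(a) = \cl E(\Phi_i'(\pii{a}\lambda_s)\lambda_s^*)$ factors through the finite-rank map $\Phi_i'$, so $F_i(s) \in \cl F(A)$. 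The bound $\|F_i(e)\|_{\cb} \leq \|\Phi_i'\|_{\cb} \leq 1$ is immediate, and Lemma~\ref{le:equivalentconvergence}(ii) supplies $\|F_i(s)(a) - a\| \to 0$, which completes the verification of Definition~\ref{de:nuclearsystem}.

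I expect the main obstacle to lie in the second direction, specifically in the polynomial approximation of $\psi_i$. The $C^*C$-trick handles positivity cleanly, but the approximation error must be propagated through the composition $\psi_i'\circ\phi_i$ and the subsequent rescaling without destroying the point-norm convergence to the identity; since the cb-norm of $\psi_i' - \psi_i$ may scale badly with $n_i$, one has to choose the per-entry polynomial approximation error as a function of $n_i$, and then pass to a diagonal net in the indexing pair (finite subset of $A\rtimes_{\alpha,r} G$, tolerance $\varepsilon$). This is a routine but somewhat delicate manoeuvre.
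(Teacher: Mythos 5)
Your proposal is correct and follows essentially the same route as the paper's proof: (i)$\Rightarrow$(ii) via the Choi--Effros finite-rank characterisation applied to the maps $S_{F_i}$, and (ii)$\Rightarrow$(i) by perturbing $\psi_i$ to a completely positive contraction with range in the ``polynomial'' subalgebra $\tilde{\pi}(\ell^1(G,A))$ and then passing to $F_i = h_{\Phi_i}$ via Proposition~\ref{p_cphs} and Lemma~\ref{le:equivalentconvergence}. Your $C^*C$-factorisation of the Choi matrix is simply an explicit justification of the density of $M_{k_i}(B_0)^+$ in $M_{k_i}(B)^+$, which the paper asserts directly before invoking Choi's theorem, and your error bookkeeping in $n_i$ matches the paper's choice of tolerance $\varepsilon_i/k_i^3$.
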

\begin{proof}
(ii)$\Rightarrow$(i)
Assume that the $C^*$-algebra $B := A\rtimes_{\alpha, r}G$ is nuclear.
Then there exist a net $(k_i)_{i\in I}\subseteq \bb{N}$
and unital completely positive maps
$\varphi_i : B\to M_{k_i}(\mathbb C)$ and $\psi_i : M_{k_i}(\mathbb C)\to B$ such that $\psi_i\circ\varphi_i\to_{i\in I}\id_B$ in point-norm topology.
Let
\[
    B_0 = \left\{\sum_{s\in F} \pii{a_s} \lambda_s : F\subseteq G \text{ is finite}, a_s\in A, s\in F\right\}.
\]
We first show that there exists a net
$\tilde\psi_i : M_{k_i}(\mathbb C)\to B$ of completely positive contractions such that the range of
$\tilde \psi_i$ is in $B_0$ and
$\tilde \psi_i\circ\varphi_i\to\id_B$ in the point-norm topology.
Since $\psi_i$ is completely positive, and the matrix $(E_{p,q})_{p,q=1}^{k_i} \in M_{k_i} (M_{k_i}(\mathbb C))$ is positive, where $\{E_{p,q}:p,q=1,\ldots, k_i\}$ is the canonical matrix unit system of $M_{k_i}(\mathbb C)$, we have
$(\psi_i(E_{p,q}))_{p,q=1}^{k_i}\in M_{k_i}(B)^+$.
As $M_{k_i}(B_0)^+$ is dense in $M_{k_i}(B)^+$, given $\varepsilon_i>0$, $i\in I$, $\varepsilon_i\to_{i\in I} 0$,
there exists a matrix $(b_{p,q}^{i})_{p,q=1}^{k_i}\in M_{k_i}(B_0)^+$ such that
\[
    \|(\psi_i(E_{p,q}))_{p,q=1}^{k_i} - (b_{p,q}^{i})_{p,q=1}^{k_i}\| < \varepsilon_i/ k_i^3.
\]
Let $\psi_i' : M_{k_i}(\mathbb C)\to B$ be the map given by
\[
    \psi_i'((\lambda_{p,q})_{p,q=1}^{k_i}) = \sum_{p,q = 1}^{k_i} \lambda_{p,q} b_{p,q}^i.
\]
By Choi's Theorem~\cite[Theorem 3.14]{paulsen}, $\psi_i'$ is completely positive; moreover,
\[ \begin{split}
    |\|\psi_i'\|_{\cb} - 1| &= |\|\psi_i'(1)\| - \|\psi_i(1)\||  \leq \|\psi_i(1) - \psi_i'(1)\| \\
    &\leq  \sum_{p=1}^{k_i}\|\psi_i(E_{p,p}) - b^i_{p,p}\|\leq \varepsilon_i/k_i^2 .
\end{split} \]
Thus, $\|\psi_i'\|_{\cb}\to_{i\in I} 1$.

Let $\tilde\psi_i=\psi_i'/\|\psi_i'\|_{\cb}$.
Then $\tilde\psi_i$ is a completely  positive contraction. Moreover, for each $p,q=1,\ldots,k_i$, we have
\[ \begin{split}
    \|\psi_i(E_{p,q})-\tilde\psi_i(E_{p,q})\|
    &\quad \leq \frac{\|\psi_i(E_{p,q})-\psi_i'(E_{p,q})\|+|1-\|\psi_i'\|_{\cb}|\|\psi_i(E_{p,q})\|}{\|\psi_i'\|_{\cb}} \\
    &\quad \leq \left(\frac{\varepsilon_i}{k_i^3}+\frac{\varepsilon_i}{k_i^2}\right)/\|\psi_i'\|_{\cb} ,
\end{split} \]
and estimating very roughly we obtain for every matrix $x=(\lambda_{p,q})_{p,q=1}^{k_i} \in M_{k_i}(\mathbb C)$
\[ \begin{split}
    \|(\psi_i-\tilde\psi_i)((\lambda_{p,q})_{p,q})\| &=
    \left\|\sum_{p,q = 1}^{k_i}  \lambda_{p,q} (\psi_i-\tilde\psi_i)(E_{p,q})\right\|\\
& \leq \sum_{p,q = 1}^{k_i} |\lambda_{p,q}| \|(\psi_i-\tilde\psi_i)(E_{p,q})\|  \\
    &\leq \left(\frac{\varepsilon_i}{k_i^3} + \frac{\varepsilon_i}{k_i^2}\right)\sum_{p,q = 1}^{k_i} \frac{|\lambda_{p,q}|}{\|\psi_i'\|_{\cb}} \\
    &\leq \left(\frac{\varepsilon_i}{k_i} + \varepsilon_i\right) \frac{\|(\lambda_{p,q})_{p,q}\|}{\|\psi_i'\|_{\cb}} ,
\end{split} \]
giving
\[
    \|\psi_i - \tilde\psi_i\|\leq \frac{2\varepsilon_i}{\|\psi_i'\|_{\cb}}.
\]
Hence, for each $x\in B$,
\[ \begin{split}
    \|(\tilde\psi_i\circ\varphi_i) (x)-x\| &\leq \|(\tilde\psi_i-\psi_i)\circ\varphi_i(x)\|+\|\psi_i\circ\varphi_i(x)-x\| \\
    &\leq \|\tilde\psi_i-\psi_i\|\|x\|+\|(\psi_i\circ\varphi_i)(x)-x\|\to_{i\in I} 0 .
\end{split} \]

Let $\Phi_i = \tilde\psi_i\circ\varphi_i$ and $h_i = h_{\Phi_i}$, $i\in I$.
Note that, as the rank of $\Phi_i$ is finite, there exists a finite set $F_i \subseteq G$ such that,
for each $s\in G$ and $a \in A$, we have $\Phi_i(\pii{a}\lambda_s) = \sum_{t\in F_i} \pii{a_t} \lambda_t$.
By Proposition~\ref{p_cphs}, $h_i$ is a completely positive Herz-Schur $(A,G,\alpha)$-multiplier such that $\|h_i(e)\|_{\cb}\leq 1$.
As $h_i(s)(a) = \cl E(\Phi_i(\pii{a}\lambda_s)\lambda_s^*)$, and the range of $\Phi_i$ is finite dimensional, so is the range of $h_i(s)$ for each $s \in G, i \in I$.
Further, as for any $s, t \in G$ we have $\cl E(\pii{a_t} \lambda_t \lambda_s^*) = a_t$ for $s = t$ and zero otherwise, we obtain that $h_i$ is finitely supported (on the set $F_i$).
Moreover,
\[
    \|h_i(s)(a) - a\| = \|\cl E((\Phi_i(\pii{a} \lambda_s)\lambda_s^*)) - \cl E(\pii{a}\lambda_s\lambda_s^*)))\|\to_{i\in I} 0
\]
for all $a\in A$ and all $s\in G$.

(i)$\Rightarrow$(ii)
Let $(F_i)_{i\in I}$ be a net satisfying the conditions of Definition \ref{de:nuclearsystem}.
By Theorem~\ref{co:equivalencesofpositiveHSmultiplier},
the map $\Phi_i$ on $A\rtimes_{\alpha,r} G$, given by
\[
    \Phi_i\left(\sum_{s\in E} \pii{a_s} \lambda_s\right) = \sum_{s\in E} \pii{F_i(s)(a_s)}\lambda_s ,
\]
where $E \subseteq G$ is finite, is completely positive and $\|\Phi_i\|_{\cb} = \|F_i(e)\|\leq 1$.
As $F_i$ is finitely supported and $F_i(s) \in \cl F(A)$ for each $s\in G$,
the range of $\Phi_i$ is finite dimensional.
Since $\|\Phi_i(x) - x\|\to_{i\in I} 0$ for $x = \sum_{s\in E} \pii{a_s} \lambda_s$ with $E$ a finite set,
the uniform boundedness of the net $(\Phi_i)_{i\in I}$ shows that
$\|\Phi_i(x) - x\|\to_{i\in I} 0$ for every $x\in A\rtimes_{\alpha,r} G$.
Thus, $A\rtimes_{\alpha,r} G$ is nuclear.
\end{proof}

\begin{remark} \label{rem:nuclear}
{\rm
It follows from the above theorem that if $(A,G,\alpha)$ is a nuclear C*-dynamical system then $A$ is nuclear.
This can be seen directly: the net $(F_i(e))_{i \in I}$ is a sequence of completely positive, contractive, finite rank maps on $A$ converging pointwise to the identity.}
\end{remark}

We now indicate how to express the proof of the result given by Brown and Ozawa~\cite[Theorem 4.3.4]{bo}
on amenable actions and nuclearity of the reduced crossed product in our language.
Assume for simplicity that $A$ is a unital $C^*$-algebra with identity $\unA$.

\begin{definition}\label{d_45}
We say that an action $\alpha$ of a discrete group $G$ on a unital  $C^*$-algebra $A$ is \emph{amenable}
if there exists a net $(T_i)_{i\in I}$ of
finitely supported functions $T_i : G \to \mathcal{Z}(A)^+$ such that
$\sum_{i\in I} T_i(t)^2 = \unA$ and
$$\left\|\sum_{s \in G}(T_i(s) - \alpha_t(T_i(t^{-1}s)))^* (T_i(s) - \alpha_t(T_i(t^{-1}s)))\right\| \to_{i\in I} 0,$$
for every $t\in G$.
\end{definition}

\begin{corollary}\label{co:amenablenuclear}
Assume that $G$ is a discrete group acting amenably on a unital, $C^*$-algebra $A$.
Then $A\rtimes_{\alpha,r} G$ is nuclear if and only if $A$ is nuclear.
\end{corollary}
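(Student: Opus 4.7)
The plan is to deduce the corollary from Theorem \ref{char_am}, which reduces nuclearity of $A \rtimes_{\alpha,r} G$ to nuclearity of the dynamical system $(A,G,\alpha)$ in the sense of Definition \ref{de:nuclearsystem}. The ``only if'' direction is then immediate: if $A \rtimes_{\alpha,r} G$ is nuclear, Theorem \ref{char_am} supplies a net witnessing nuclearity of $(A,G,\alpha)$, and Remark \ref{rem:nuclear} yields nuclearity of $A$; amenability plays no role in this half.

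For the converse, I assume $A$ is nuclear and $\alpha$ is amenable. The idea is to blend the amenability net $(T_i)_{i \in I}$ from Definition \ref{d_45} with a net $(\Psi_j)_{j \in J}$ of completely positive, finite-rank contractions on $A$ approximating $\id_A$ pointwise (provided by nuclearity of $A$) into the doubly indexed family of Herz-Schur candidates
\[
    F_{i,j}(t)(a) := \sum_{q \in G} T_i(q)\, \alpha_q\bigl(\Psi_j(\alpha_{q^{-1}}(a))\bigr)\, \alpha_t\bigl(T_i(t^{-1}q)\bigr),
\]
modelled on Proposition \ref{hsh} with an additional central weighting by $T_i$. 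Each sum is finite, $F_{i,j}$ is supported in $\supp(T_i)\cdot\supp(T_i)^{-1}$, and each $F_{i,j}(t)$ is a finite combination of finite-rank maps, so it lies in $\cl F(A)$. Centrality of the $T_i(q)$ gives $F_{i,j}(e)(\unA) = \sum_q T_i(q)^2 \alpha_q(\Psi_j(\unA)) \leq \sum_q T_i(q)^2 = \unA$, which will upgrade to $\|F_{i,j}(e)\|_{\cb} \leq 1$ once complete positivity is known.

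To establish complete positivity of $F_{i,j}$, by Theorem \ref{co:equivalencesofpositiveHSmultiplier} it suffices to show that $\cl N(F_{i,j})$ is of positive type. After the substitution $r = s^{-1}q$, the relevant matrix $(\cl N(F_{i,j})(s_p, s_q)(a_{p,q}))_{p,q=1}^m$ (for $(a_{p,q}) \in M_m(A)^+$ and $s_1, \ldots, s_m \in G$) decomposes as a finite sum $\sum_r D_r M_r D_r$, where $D_r$ is the diagonal matrix with $p$-th entry $\alpha_{s_p^{-1}}(T_i(s_p r)) \in \cl Z(A)^+$ and $M_r = (\alpha_r \circ \Psi_j \circ \alpha_{r^{-1}})^{(m)}\bigl((a_{p,q})\bigr) \in M_m(A)^+$. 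Writing $M_r = X_r^* X_r$ and exploiting centrality of the entries of $D_r$ to commute them past those of $X_r^* X_r$, one rewrites $D_r M_r D_r = (X_r D_r)^*(X_r D_r) \geq 0$. Summing over $r$ gives positive type.

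For the pointwise approximation condition of Definition \ref{de:nuclearsystem}, one splits
\[
    F_{i,j}(t)(a) - a = \sum_q T_i(q)\alpha_t(T_i(t^{-1}q))\bigl[\alpha_q(\Psi_j(\alpha_{q^{-1}}(a))) - a\bigr] + (\varphi_i(t) - \unA)a,
\]
where $\varphi_i(t) := \sum_q T_i(q)\alpha_t(T_i(t^{-1}q))$. The commutative Cauchy-Schwarz inequality in $\cl Z(A)$ together with Definition \ref{d_45} yields $\|\varphi_i(t) - \unA\| \to 0$ as $i \to \infty$; for fixed $i$, the first sum is finite and its summands tend to zero as $j \to \infty$, since $\Psi_j \to \id_A$ pointwise. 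A standard diagonal reindexing over finite subsets of $G \times A$ and tolerances $\varepsilon > 0$ then produces a single net satisfying condition (iv) of Definition \ref{de:nuclearsystem}, and Theorem \ref{char_am} concludes that $A \rtimes_{\alpha,r} G$ is nuclear. The main obstacle I anticipate is the positive-type step: the identity $D_r M_r D_r = (X_r D_r)^*(X_r D_r)$ depends essentially on the $T_i$ taking values in the \emph{centre} of $A$, echoing the role of centrality highlighted in the discussion of $\alpha$-positive definiteness following Theorem \ref{co:equivalencesofpositiveHSmultiplier}.
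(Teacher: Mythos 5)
Your argument is correct and is in essence the paper's own proof: the `only if' direction goes through Theorem \ref{char_am} and Remark \ref{rem:nuclear}, and for the converse you build exactly the same multipliers $F_{i,j}(t)(a)=\sum_{q} T_i(q)\,\alpha_q(\Psi_j(\alpha_{q^{-1}}(a)))\,\alpha_t(T_i(t^{-1}q))$ that the paper uses, verifying the three conditions of Definition \ref{de:nuclearsystem} and concluding via Theorem \ref{char_am}. The only genuine deviation is the complete positivity step: the paper produces an explicit factorisation $\cl N(F_{i,j})(s,t)(a)=V_{i,j}(s)^*\rho_j(a)V_{i,j}(t)$ by assembling column operators from Stinespring dilations of the maps $\alpha_p\circ\Psi_j\circ\alpha_p^{-1}$ and then invokes Theorem \ref{re:Sphicpintermsofphi}, whereas you verify positive type directly through the finite decomposition $\sum_r D_r M_r D_r$ and appeal to Theorem \ref{co:equivalencesofpositiveHSmultiplier}; both are valid, yours being slightly more elementary, the paper's giving the norm bound $\|V_{i,j}(s)\|\leq 1$ as a by-product. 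One correction to your closing remark: centrality of the $T_i$ plays no role in that step --- since each $D_r$ is self-adjoint, $D_rM_rD_r=(X_rD_r)^*(X_rD_r)\geq 0$ in any $C^*$-algebra, with no commutation needed (and note that a diagonal matrix with central entries does not in general commute with an arbitrary matrix over $A$, so the commutation you describe is not available anyway). Where centrality of $T_i$ genuinely enters is the approximation estimate, when you move $\alpha_q(\Psi_j(\alpha_{q^{-1}}(a)))$ past $\alpha_t(T_i(t^{-1}q))$ to isolate the term $(\varphi_i(t)-\unA)a$; that rearrangement together with your Cauchy--Schwarz bound for $\|\varphi_i(t)-\unA\|$ is precisely what the paper delegates to Lemma 4.3.2 of \cite{bo}, and your explicit diagonal reindexing to obtain a single net is a point the paper leaves implicit. (Minor slip: the approximation requirement is condition (iii), not (iv), of Definition \ref{de:nuclearsystem}.)
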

\begin{proof}
The forward implication follows from the existence of conditional expectation from $A\rtimes_{\alpha,r}G$ onto $A$, and does not require amenability of the action (see Remark~\ref{rem:nuclear}).

Assume that $A$ is nuclear and the action of $G$ on $A$ is amenable.
We will show that if $(\Phi_j)_{j \in J }$ is an approximating net of unital completely positive  maps for $A$ and
$(T_i)_{i \in I}$ is a net as in Definition \ref{d_45} (where $T_i$ is supported on $\mathcal{F}_i$),
then the maps $F_{i,j}(s)$ ($i \in I, j \in J, s \in G$), given by
\[
    F_{i,j}(s) (a) = \sum_{p \in G} T_i(p) \alpha_p(\Phi_j(\alpha_p^{-1}(a)))\alpha_s (T_i(s^{-1}p)), \quad  a \in A,
\]
yield Herz-Schur multipliers providing the completely positive finite rank approximations for  $A\rtimes_{\alpha,r} G$.
Note that the summation above is in fact over the finite set $\mathcal{F}_i \cap s\mathcal{F}_i$.



We first show that $F_{i,j}$ is a completely positive Herz-Schur $(A,G,\alpha)$-multiplier.
For $s$, $t\in G$  and $a\in A$, we have
\[ \begin{split}
    \cl N(F_{i,j})(s,t)(a) &= \sum_{p \in G} 
\alpha_{s^{-1}}(T_i(p) \alpha_p(\Phi_j(\alpha_p^{-1}(\alpha_s(a))))\alpha_{st^{-1}} (T_i(ts^{-1}p))) \\
    &= \sum_{p \in G} 
\alpha_{s^{-1}}(T_i(p)) \alpha_{s^{-1}p}(\Phi_j(\alpha_{s^{-1}p}^{-1}(a)))\alpha_{t^{-1}} (T_i(ts^{-1}p))) \\
    &=\sum_{p \in G} 
\alpha_{s^{-1}}(T_i(sp)) \alpha_{p}(\Phi_j(\alpha_{p}^{-1}(a)))\alpha_{t^{-1}} (T_i(tp))).
\end{split} \]
As each $\Phi_j$ is a unital completely positive map on $A\subseteq B(H)$,
there exist Hilbert spaces $H_{p,j}$, $*$-representations $\pi_{p,j}:A\to \cl B(H_{p,j})$, and bounded operators $V_{p,j}\in \cl B(H, H_{p,j})$, such that
$$\alpha_p(\Phi_j(\alpha_p^{-1}(a)))=V_{p,j}^*\pi_{p,j}(a)V_{p,j}, \ \ \ p\in G, j\in J, a\in A.$$
Moreover,
\[
    \|\alpha_p\circ\Phi_j\circ\alpha_p^{-1}\|_{\rm cb}=\|\Phi_j\|_{\rm cb}=\|V_{p,j}\|^2=1.
\]
Let $\rho_j:=\oplus_{p\in G}\pi_{p,j}$ and let $V_{i,j}(s):H\to\oplus_{p\in G}H_{p,j}$ be the column operator $( V_{p,j}\alpha_{s^{-1}}(T_i(sp)))_{p\in G}$. 
We then get
\[
    \cl N(F_{i,j})(s,t)(a)=V_{i,j}(s)^*\rho_j(a)V_{i,j}(s)
\]
with
\[ \begin{split}
    \|V_{i,j}(s)\|^2 &= \left\|\sum_{p\in G}\alpha_{s^{-1}}(T_i(sp))^*V_{p,j}^*V_{p,j}\alpha_{s^{-1}}(T_i(sp))\right\| \\
    &\leq \left\|\sum_{p\in G}\alpha_{s^{-1}}(T_i(sp))^2\right\| = \left\|\sum_{p\in G} T_i(p)^2\right\| = 1.
\end{split} \]
By Theorems~\ref{re:Sphicpintermsofphi} and \ref{co:equivalencesofpositiveHSmultiplier}, $F_{i,j}$ is a completely postive Herz-Schur $(A,G,\alpha)$-multiplier.
Furthermore,
\[
    F_{i,j}(e)(a)=\sum_{p \in \mathcal{F}_i} T_i(p) \alpha_p(\Phi_j(\alpha_p^{-1}(a)))T_i(p)
\]
and, since each $\Phi_j$ is a unital completely positive map,
\[
    \|F_{i,j}(e)\|_{\rm cb} = \|F_{i,j}(e)(1_A)\| \leq \left\|\sum_{p\in G} T_i(p)^2\right\| = 1.
\]
Since $\Phi_j\in\cl F(A)$, we have that $F_{i,j}(s)\in\cl F(A)$ for all $s\in G$, $i\in I$ and $j\in J$. Finally,
\[ \begin{split}
    \|F_{i,j}(s)(a)-a\| &= \|\sum_{p \in \mathcal{F}_i \cap s\mathcal{F}_i} T_i(p) \alpha_p(\Phi_j(\alpha_p^{-1}(a)))\alpha_s (T_i(s^{-1}p))-a\| \\
    &\leq \|\sum_{p \in \mathcal{F}_i \cap s\mathcal{F}_i} T_i(p) \alpha_s (T_i(s^{-1}p))\|\|\alpha_p(\Phi_j(\alpha_p^{-1}(a))-\alpha_p^{-1}(a))\| \\
    &\quad + \|\sum_{p \in \mathcal{F}_i \cap s\mathcal{F}_i} T_i(p)\alpha_s (T_i(s^{-1}p))-1_A)\|\|a\| .
\end{split} \]
The latter converges to zero for any $s\in G$  by \cite[Lemma 4.3.2]{bo}.
\end{proof}

\noindent {\bf Acknowledgements. }
A.S.\ was partially supported by the National Science Centre (NCN) grant no.~2014/14/E/ST1/00525.
The present work started during visits of A.S. and L.T. to Queen's University Belfast in 2016, which they gratefully acknowledge.
A.McK. is grateful for the hospitality at the Institute of Mathematics of the Polish Academy of Sciences during a visit in March 2017.
I.G.T. acknowledges the hospitality of the Department of Mathematical Sciences of the Chalmers University of Technology and the
University of Gothenburg during his visit in April 2017.

\end{document}